\newtheorem{thm}{Theorem}[section]
\newtheorem{lem}[thm]{Lemma}
\newtheorem{cor}[thm]{Corollary}
\theoremstyle{remark}
\newcommand{\C}[1]{\mathcal #1}
\newcommand{\B}[1]{\mathbb #1}
\def \Aut {\mbox{\rm Aut}}
\def \Fix{\mbox{\rm Fix}}
\def \Inn {\mbox{\rm Inn}}
\begin{document}

\author{Dongseok Kim}
\address{Department of Mathematics \\Kyonggi University
\\ Suwon, 443-760 Korea}
\email{dongseok@kgu.ac.kr}
\thanks{}

\author{Young Soo Kwon}
\address{Department of Mathematics \\Yeungnam University \\Kyongsan, 712-749, Korea}
\email{ysookwon@ynu.ac.kr}

\author{Jaeun Lee}
\address{Department of Mathematics \\Yeungnam University \\Kyongsan, 712-749, Korea}
\email{julee@yu.ac.kr}
\thanks{???}

\subjclass[2000]{Primary 05C30; Secondary 05C25} \keywords{Cayley
graphs, weak equivalences, equivalences, circulant graphs}

\title[Degree distributions for a class of Circulant graphs]{Degree distributions for a class of Circulant graphs}
\begin{abstract}
We characterize the equivalence and the weak equivalence of Cayley
graphs for a finite group $\C{A}$. Using these characterizations, we
find degree distribution polynomials for weak equivalence of some graphs including 1) circulant graphs
of prime power order, 2) circulant graphs
of order $4p$, 3) circulant graphs
of square free order and 4) Cayley graphs of order $p$ or $2p$. 
As an application, we find an enumeration formula for the number of weak equivalence classes
of circulant graphs of prime power order, order $4p$ and square free order and Cayley graphs of order $p$ or $2p$.
\end{abstract}

\maketitle

\section{Introduction}

Let $\C{A}$ be a finite group with identity $e$ and let $\Omega$ be
a generating set for $\C{A}$ with properties that $\Omega =
\Omega^{-1}$ and $e\not\in \Omega$, where $\Omega^{-1}=\{x^{-1}\,|\,
x\in \Omega\}$. The \emph{Cayley graph} $C(\C{A}, \Omega)$ is a
simple graph whose vertex set and edge set are defined as follows:
$$V(C(\C{A}, \Omega))=\C{A}~\mathrm{and}~ E(C(\C{A}, \Omega))=\{\{g, h\}\,
|\, g^{-1}h \in \Omega\}.$$ Because of their rich connections with a
broad range of areas, Cayley graphs have been in the center of the research in
graph theory~\cite{BMPRS, DSS, PR, Rosenhouse}. Spectral estimations
of Cayley graphs have been studied~\cite{Cioaba, FMT}. It plays a
key role in the study of the geometry of hyperbolic
groups~\cite{kapo}. Recently, Li has found wonderful results on
edge-transitive Cayley graphs~\cite{Li, LL}. For standard terms and notations, we refer
to~\cite{GTP1}.

The Cayley graph $C(\C{A}, \Omega)$ admits a natural $\C{A}$-action,
$\cdot: \C{A} \times C(\C{A}, \Omega) \to C(\C{A}, \Omega)$ defined
by $g\cdot g'=gg'$ for all $g,g'\in\C{A}$. A graph $\Gamma$ with an
$\C{A}$-action is called an \emph{$\C{A}$-graph}. So, every Cayley
graph $C(\C{A}, \Omega)$ is an $\C{A}$-graph. A graph isomorphism
$f:\Gamma_1 \to \Gamma_2$ between two $\C{A}$-graphs $\Gamma_1$ and $\Gamma_2$ is a \emph{weak equivalence} if
there exists a group automorphism $\alpha:\C{A}\to \C{A}$ such that
$f(g\cdot u) = \alpha(g)\cdot f(u)$ for all $g\in \C{A}$ and $u\in
V(\Gamma_1)$. When $\alpha$ is the identity automorphism, we say that $f$
is an \emph{equivalence}. If there is a weak equivalence between
$\C{A}$-graphs $\Gamma_1$ and $\Gamma_2$, we say $\Gamma_1$ and $\Gamma_2$ are \emph{weak
equivalent}. Similarly, if there is an equivalence between
$\C{A}$-graphs $\Gamma_1$ and $\Gamma_2$, we say $\Gamma_1$ and $\Gamma_2$ are
\emph{equivalent}. Enumerations of the equivalence classes and weak equivalence classes
of some graphs have been studied~\cite{FKKL, KL1}.

In particular, the isomorphism problem of Cayley
graphs has been studied by several authors~\cite{CL, FL, Li98}.
However, one classical isomorphism problem on Cayley graphs is a
conjecture rose by $\acute{{\rm A}}$d$\acute{{\rm a}}$m \cite{Adam}
that two Cayley graphs of $\B{Z}_n$ are isomorphic
if and only if they are isomorphic by a group automorphism of $\B{Z}_n$. This conjecture
was first disproven by Elpas and Turner~\cite{ET70}. After that, a particular attention
has been paid to determine which group $\C{A}$ has the property that
two Cayley graphs of $\C{A}$ are isomorphic if and only if they are isomorphic by
a group automorphism of $\C{A}$. Such a group is called a \emph{CI-group}.
For cyclic group $\B{Z}_n$, CI-groups were completely classified by Muzychuk
\cite{Muzy, Muzy1} that a cyclic group
of order $n$ is a CI-group if and only if $n = 8, 9, 18, m, 2m$ or $4m$ where $m$ is odd and
square-free. Therefore, for any square-free number $n$, the number of
non-isomorphic connected $k$-regular Cayley graphs of $\B{Z}_n$ is
equal to the coefficient $a_k (\B{Z}_{n})$ of $x^k$ in $\Psi_{\B{Z}_{n}}^w(x)$
which is the degree distribution polynomial of weak equivalence classes of Caylay graphs whose underlying group is $\C{\B{Z}_{n}}$
and the number of non-isomorphic connected Cayley graphs of $\B{Z}_n$ is equal to $\Psi_{\B{Z}_{n}}^w(1)$.
Li has a wonderful survey for the isomorphism problem of Cayley
graphs~\cite{Li02}.

A \emph{circulant} graph is a graph whose automorphism group
of the graph includes a cyclic subgroup which acts transitively on 
vertex set of the graph.
The isomorphism problem of circulant
graphs had been studied by several authors~\cite{AP, Dobson} and completely
solved by Muzychuk~\cite{Muzy2}.

In present article, we deal with the weak equivalence classes of circulant graphs.
We first find a characterization of the equivalence and the weak equivalence of Cayley
graphs for a finite group $\C{A}$.
As the main result of the article, we find the degree distribution polynomials
for the weak equivalence classes of circulant graphs of prime power order or square free order.
As an application, we find an enumeration formula for the number of the weak equivalence classes
of circulant graphs of prime power order, order $4p$ and square free order and
the number of the weak equivalence classes
of Cayley graphs of order $p$ or $2p$.

The outline of this paper is as follows. In Section~\ref{char}, we
characterize the equivalence and the weak equivalence of Cayley graphs for a finite
group $\C{A}$. In Section~\ref{abdn}, we find some computation formulae for degree distribution polynomials.
Combining results in these two sections,
we find the degree distribution polynomials for the weak equivalence classes of circulant graphs of prime power order
and circulant graphs of square free order in Section~\ref{DistC}.
At last we find the degree distribution polynomials for the weak equivalence classes of Cayley graphs of order $p$ or $2p$ in
Section~\ref{cayleydis}.

\section{A characterization of Cayley graphs}
\label{char}

Our definition of a weak equivalence between two Cayley graphs can
be interpreted as a color-consistence and a direction preserving
graph isomorphism~\cite[Section 1.2.4]{GTP1}.

\begin{thm}\label{weakequiv}
Let  $C(\C{A}, \Omega)$ and $C(\C{A}, \Omega')$ be two Cayley
graphs. The followings are equivalent.
\begin{enumerate}
\item[{\rm (1)}] $C(\C{A}, \Omega)$ and $C(\C{A}, \Omega')$
           are weakly equivalent,
\item[{\rm (2)}] There exists  an automorphism $\alpha:\C{A}\to
\C{A}$ such that $\alpha(\Omega)=\Omega'$.
\end{enumerate}
\end{thm}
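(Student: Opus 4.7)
The plan is to prove the two implications separately. For (2)$\Rightarrow$(1), given an automorphism $\alpha$ of $\C{A}$ with $\alpha(\Omega)=\Omega'$, I would take $\alpha$ itself as the candidate weak equivalence $f:C(\C{A},\Omega)\to C(\C{A},\Omega')$. The edge condition is immediate: $\{g,h\}\in E(C(\C{A},\Omega))$ iff $g^{-1}h\in\Omega$ iff $\alpha(g)^{-1}\alpha(h)=\alpha(g^{-1}h)\in \alpha(\Omega)=\Omega'$, i.e.\ $\{\alpha(g),\alpha(h)\}\in E(C(\C{A},\Omega'))$. Equivariance is just $\alpha(g\cdot u)=\alpha(gu)=\alpha(g)\alpha(u)=\alpha(g)\cdot \alpha(u)$, so $\alpha$ is a weak equivalence via the automorphism $\alpha$ itself.

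For (1)$\Rightarrow$(2), I assume a weak equivalence $f$ together with an automorphism $\alpha$ satisfying $f(g\cdot u)=\alpha(g)\cdot f(u)$. The key step is to specialize $u=e$, which forces $f(g)=\alpha(g)\cdot f(e)=\alpha(g)h_0$, where $h_0:=f(e)$. Thus $f$ is uniquely determined by $\alpha$ and by a single right translate. Next I would translate the edge condition: $x\in\Omega$ iff $\{e,x\}\in E(C(\C{A},\Omega))$ iff $\{h_0,\alpha(x)h_0\}\in E(C(\C{A},\Omega'))$ iff $h_0^{-1}\alpha(x)h_0\in\Omega'$. Applying the same reasoning to $f^{-1}$ (itself a weak equivalence via $\alpha^{-1}$) gives the reverse inclusion, so $h_0^{-1}\alpha(\Omega)h_0=\Omega'$.

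To finish, I would set $\beta:=\Inn(h_0^{-1})\circ \alpha$, i.e.\ $\beta(g)=h_0^{-1}\alpha(g)h_0$. Since $\alpha$ is an automorphism and conjugation by $h_0^{-1}$ is an inner automorphism, $\beta\in\Aut(\C{A})$, and by construction $\beta(\Omega)=\Omega'$, proving (2).

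The one subtle point — and the only real obstacle — is that $f$ need not fix the identity, so one cannot simply read off $\alpha(\Omega)=\Omega'$ directly from the weak equivalence. The idea of absorbing the translation $h_0=f(e)$ into an inner automorphism of $\C{A}$ (which is then composed with $\alpha$ to produce a new automorphism $\beta$) is what reconciles the discrepancy. Once this twist is noticed, the rest is a mechanical check.
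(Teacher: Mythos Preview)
Your proposal is correct and follows essentially the same route as the paper's proof: for (2)$\Rightarrow$(1) both take $f=\alpha$, and for (1)$\Rightarrow$(2) both specialize at $u=e$ to write $f(g)=\alpha(g)f(e)$ and then compose $\alpha$ with conjugation by $f(e)^{-1}$ to obtain the required automorphism carrying $\Omega$ to $\Omega'$. Your explicit appeal to $f^{-1}$ for the reverse inclusion is a small elaboration; the paper simply asserts equality (which is justified since $\alpha$ is a bijection and $|\Omega|=|\Omega'|$).
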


\begin{proof}
$(1)\Rightarrow(2)$: Let $f:C(\C{A}, \Omega)\to C(\C{A}, \Omega')$
be a weak equivalence. Then there exists a group automorphism
$\tau:\C{A}\to \C{A}$ such that $f(g)=\tau(g)f(e)$ for each $g\in
\C{A}$. Let $x\in \Omega$. Then $\{e, x\}$ is an edge in $C(\C{A},
\Omega)$. Since $\{f(e), f(x)\}$ is an edge in
 $C(\C{A}, \Omega')$, $f(e)^{-1}f(x)=f(e)^{-1}\tau(x)f(e)$ is
 an element of $\Omega'$. Hence the map $\alpha:\C{A}\to\C{A}$
 defined by $\alpha(g)=f(e)^{-1}\tau(g)f(e)$ is a group
 automorphism such that $\alpha(\Omega)=\Omega'$.

$(2)\Rightarrow(1)$: Let  $\alpha:\C{A}\to \C{A}$ be a group
automorphism such that $\alpha(\Omega)=\Omega'$. We define
$f:C(\C{A}, \Omega)\to C(\C{A}, \Omega')$ by $f(g)=\alpha(g)$. If
$\{g,h\}$ is an edge in $C(\C{A}, \Omega)$, then $g^{-1}h\in \Omega$
and
$f(g)^{-1}f(h)=\alpha(g)^{-1}\alpha(h)=\alpha(g^{-1}h)\in\alpha(\Omega)=\Omega'$.
Hence $f$ is a graph isomorphism such that
$f(gg')=\alpha(gg')=\alpha(g)\alpha(g')=\alpha(g)f(g')$, $i.e.$, $f$
is a weak equivalence.
\end{proof}

By using a similar method in the proof of Theorem~\ref{weakequiv}, we can have
the following theorem.

\begin{thm}\label{equiv}
Let  $C(\C{A}, \Omega)$ and $C(\C{A}, \Omega')$ be
two Cayley graphs. The followings are equivalent.
\begin{enumerate}
\item[{\rm (1)}] $C(\C{A}, \Omega)$ and $C(\C{A}, \Omega')$
           are  equivalent,
\item[{\rm (2)}] $\Omega$ and $\Omega'$ are conjugate in $\C{A}$,
$i.e.$, there exists an element $\gamma\in \C{A}$ such that $\gamma^{-1}\Omega
\gamma=\Omega'$.
\end{enumerate}
\begin{proof}
$(1)\Rightarrow(2)$: Let $f:C(\C{A}, \Omega)\to C(\C{A}, \Omega')$
be an equivalence and let $x\in \Omega$. Then $\{e, x\}$ is an edge in $C(\C{A},
\Omega)$. Since $\{f(e), f(x)\}$ is an edge in
 $C(\C{A}, \Omega')$, $f(e)^{-1}f(x)=f(e)^{-1}xf(e)$ is
 an element of $\Omega'$. Hence  $f(e)^{-1}\Omega f(e)=\Omega'$.

$(2)\Rightarrow(1)$: Let $\gamma$ be an element of $\C{A}$ such that
$\gamma^{-1}\Omega\gamma=\Omega'$.
 We define
$f:C(\C{A}, \Omega)\to C(\C{A}, \Omega')$ by $f(g)=g\gamma$. If
$\{g,h\}$ is an edge in $C(\C{A}, \Omega)$, then $g^{-1}h\in \Omega$
and
$f(g)^{-1}f(h)=(g\gamma)^{-1}(h\gamma)=(\gamma^{-1}g^{-1})(h\gamma)=\gamma^{-1}(g^{-1}h)\gamma\in \gamma^{-1}\Omega\gamma=\Omega'$.
Hence $f$ is a graph isomorphism such that
$f(gg')=(gg')\gamma=g(g'\gamma)=gf(g')$, $i.e.$, $f$ is an equivalence.
\end{proof}
\end{thm}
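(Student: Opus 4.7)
The plan is to imitate the proof of Theorem~\ref{weakequiv} almost verbatim, exploiting the fact that the only extra constraint here is $\alpha=\mathrm{id}$. The central observation is that an equivalence $f:C(\C{A},\Omega)\to C(\C{A},\Omega')$, by definition, satisfies $f(g\cdot u)=g\cdot f(u)$ for all $g\in\C{A}$ and $u\in V$; specializing $u=e$ gives $f(g)=g\,f(e)$, so $f$ must be right multiplication by the single element $\gamma:=f(e)$. This reduces both implications to a short calculation.

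For $(1)\Rightarrow(2)$, I would set $\gamma:=f(e)$ and, for each $x\in\Omega$, use that $\{e,x\}$ is an edge of $C(\C{A},\Omega)$, so $\{f(e),f(x)\}=\{\gamma,x\gamma\}$ is an edge of $C(\C{A},\Omega')$. This yields $\gamma^{-1}(x\gamma)=\gamma^{-1}x\gamma\in\Omega'$, and hence $\gamma^{-1}\Omega\gamma\subseteq\Omega'$. Equality then follows from the cardinality equality $|\Omega|=|\Omega'|$ (equivalent Cayley graphs are in particular isomorphic and therefore have the same common vertex degree), or equivalently by running the same argument on $f^{-1}$, which is also an equivalence.

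For $(2)\Rightarrow(1)$, I would define $f(g):=g\gamma$. To see $f$ is a graph homomorphism in both directions, compute
\[
f(g)^{-1}f(h)=(g\gamma)^{-1}(h\gamma)=\gamma^{-1}(g^{-1}h)\gamma,
\]
so $\{g,h\}$ is an edge of $C(\C{A},\Omega)$ iff $g^{-1}h\in\Omega$ iff $\gamma^{-1}(g^{-1}h)\gamma\in\gamma^{-1}\Omega\gamma=\Omega'$ iff $\{f(g),f(h)\}$ is an edge of $C(\C{A},\Omega')$. Finally, $f(g'g)=(g'g)\gamma=g'(g\gamma)=g'f(g)$, confirming that $f$ intertwines the natural left $\C{A}$-action with the identity automorphism; that is, $f$ is an equivalence.

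There is no real obstacle: the argument is a direct specialization of Theorem~\ref{weakequiv}, with the minor simplification that the ambient automorphism $\alpha$ is now forced to be inner (indeed $\alpha=\mathrm{id}$), so the conjugation $\gamma^{-1}\Omega\gamma=\Omega'$ emerges directly as the characterizing condition.
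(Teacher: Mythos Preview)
Your proof is correct and follows essentially the same approach as the paper: both directions use that an equivalence must be right multiplication by $\gamma=f(e)$, and then reduce the edge condition to the conjugation $\gamma^{-1}\Omega\gamma=\Omega'$ by the same computation. Your version is slightly more careful in justifying equality (via $|\Omega|=|\Omega'|$ or the inverse map) rather than just containment, but otherwise the arguments are the same.
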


For a finite group $\C{A}$, let  $G(\C{A})=\{\Omega\subset
\C{A}\,:\, \Omega^{-1}=\Omega, <\Omega>=\C{A}, e\not\in
\Omega \}.$ Notice that $G(\C{A})$ contains all equivalence
classes of Cayley graphs $C(\C{A},\Omega)$. Furthermore, any subgroup of group automorphisms of $\C{A}$
 admits a natural action on $G(\C{A})$ by $\alpha\cdot
\Omega=\alpha(\Omega)$. By Theorem~\ref{weakequiv},
$\C{E}^w(\C{A})$, the number of weak equivalence classes of Cayley
graphs $C(\C{A},\Omega)$, is equal to the number of
orbits of the $\Aut(\C{A})$ action on $G(\C{A})$, where
$\Aut(\C{A})$ is the group of all group isomorphisms of $\C{A}$.
Similarly, one can see that the number $\C{E}(\C{A})$ of the
equivalence classes of Cayley graphs $C(\C{A},\Omega)$
is equal to the number of orbits of the $\Inn(\C{A})$ action on
$G(\C{A})$ by Theorem~\ref{equiv}, where $\Inn(\C{A})$ is the
group of all inner automorphisms of $\C{A}$.

For a finite group $\C{A}$, let $a^w_k(\C{A})$ (resp., $a_k(\C{A})$) be the number of the weak equivalence
(resp., equivalence) classes of Cayley graphs $C(\C{A}, \Omega)$ with degree $k$.
We call the polynomial
$$\Psi_\C{A}^w(x)=\sum_{k=1}^{|\C{A}|-1} a^w_k(\C{A})x^k~~
{\rm{(resp.}}, \Psi_\C{A}(x)=\sum_{k=1}^{|\C{A}|-1} a_k(\C{A})x^k)$$
the \emph{degree distribution polynomial of the weak equivalence
classes} (equivalence classes, respectively) of Cayley graphs
whose underlying group is $\C{A}$. Notice that $\Psi_\C{A}^w(1) =\C{E}^w(\C{A})$
and  $\Psi_\C{A}(1) =\C{E}(\C{A})$. For convenience, for any finite group $\C{A}$ and any automorphism
$\alpha\in \Aut(\C{A})$, let $\Fix_\alpha(\C{A})=\{\Omega \in G(\C{A}) : \alpha(\Omega)=\Omega\}$.
Now the following theorem comes from the Burnside lemma.

\begin{thm}\label{formul1}
Let $\C{A}$ be a finite group. Then we have
$$\Psi_\C{A}^w(x)=\frac{1}{|\Aut(\C{A})|} \sum_{\alpha\in {\rm{Aut}}(\C{A})}\left(\sum_{\Omega\in {\rm{Fix}}_\alpha(\C{A})} x^{|\Omega|}\right),$$
and
$$\Psi_\C{A}(x)=\frac{1}{|\Inn(\C{A})|}  \sum_{\alpha\in {\rm{Inn}}(\C{A})}\left(\sum_{\Omega\in {\rm{Fix}}_\alpha(\C{A})} x^{|\Omega|}\right).$$
\end{thm}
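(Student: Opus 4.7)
The plan is to apply the Cauchy--Frobenius (Burnside) lemma to the action of $\Aut(\C{A})$ on $G(\C{A})$, refined by degree. The preceding discussion already identifies $\C{E}^w(\C{A})$ and $\C{E}(\C{A})$ as the number of orbits of $\Aut(\C{A})$ and $\Inn(\C{A})$ respectively on $G(\C{A})$; what remains is to keep track of the degree of the Cayley graph, which is $|\Omega|$ for $\Omega\in G(\C{A})$, while performing the orbit count.

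First I would observe that for any $\alpha\in\Aut(\C{A})$ and any $\Omega\in G(\C{A})$, the set $\alpha(\Omega)$ has the same cardinality as $\Omega$. Hence the action of $\Aut(\C{A})$ on $G(\C{A})$ preserves the degree, and we can stratify: let $G_k(\C{A})=\{\Omega\in G(\C{A}):|\Omega|=k\}$, so that $G(\C{A})=\bigsqcup_k G_k(\C{A})$ and each $G_k(\C{A})$ is $\Aut(\C{A})$-invariant. In particular, by Theorem~\ref{weakequiv}, the number $a^w_k(\C{A})$ of weak equivalence classes of Cayley graphs of degree $k$ equals the number of $\Aut(\C{A})$-orbits on $G_k(\C{A})$.

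Second, I would apply Burnside's lemma to each $G_k(\C{A})$ to get
\[
a^w_k(\C{A})=\frac{1}{|\Aut(\C{A})|}\sum_{\alpha\in \Aut(\C{A})}|\Fix_\alpha(\C{A})\cap G_k(\C{A})|.
\]
Multiplying by $x^k$, summing over $k$, and interchanging the two sums gives
\[
\Psi^w_\C{A}(x)=\sum_{k=1}^{|\C{A}|-1}a^w_k(\C{A})x^k=\frac{1}{|\Aut(\C{A})|}\sum_{\alpha\in\Aut(\C{A})}\sum_{k=1}^{|\C{A}|-1}\sum_{\Omega\in \Fix_\alpha(\C{A})\cap G_k(\C{A})}x^{|\Omega|},
\]
and since $\Fix_\alpha(\C{A})=\bigsqcup_k\bigl(\Fix_\alpha(\C{A})\cap G_k(\C{A})\bigr)$, the inner double sum collapses to $\sum_{\Omega\in\Fix_\alpha(\C{A})}x^{|\Omega|}$, yielding the first formula.

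The second formula follows by exactly the same argument with $\Aut(\C{A})$ replaced by $\Inn(\C{A})$, invoking Theorem~\ref{equiv} in place of Theorem~\ref{weakequiv}. There is no real obstacle here; the only minor point to check is that the group action preserves $|\Omega|$ so that the weighting by $x^{|\Omega|}$ is compatible with the orbit stratification, after which the statement reduces to a weighted Burnside count.
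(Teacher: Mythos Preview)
Your argument is correct and is exactly the intended one: the paper simply states that the theorem ``comes from the Burnside lemma'' without spelling out the details, and your degree-stratified application of Burnside to the $\Aut(\C{A})$- and $\Inn(\C{A})$-actions on $G(\C{A})$ is precisely what is meant. The only thing you add beyond the paper is the explicit check that $|\alpha(\Omega)|=|\Omega|$ so that the grading by degree is preserved, which is a welcome clarification.
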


In order to compute  $\sum_{\Omega\in {\rm{Fix}}_\alpha(\C{A})} x^{|\Omega|}$, we will find a formula in terms of the M\"obius function defined on the
subgroup lattice of ${\C{A}}$.  The M\"obius function assigns
an integer $\mu (K)$ to each subgroup $K$ of ${\C{A}}$ by the
recursive formula $$\sum_{H\ge K} \mu
(H)=\delta_{K,{\C{A}}}=\left\{\begin{array}{l} 1\
\,~\mbox{if}\
K={\C{A}},\\[.5ex] 0\ \,~\mbox{if}\ K<{\C{A}}.
\end{array}
\right.$$
Jones~\cite{Jo95, Jo99} used such  functions to  count
the normal subgroups of a surface
group and a crystallographic group, and applied it to count certain
covering surfaces.

For convenience, let $S(\C{A})=\{\Omega\subset \C{A}:\Omega=\Omega^{-1}, e\not\in\Omega \}$
for a finite group $\C{A}$ and for any subgroup $K$ of $\C{A}$,
let $S(K)=\{\Omega\subset K:\Omega=\Omega^{-1}, e\not\in\Omega \}$. Then we can see that
$S(\C{A})=\bigcup_{K\leq {\C{A}}} G(K),$
and that
$$\sum_{\Omega\in S(\C{A}),\,\alpha(\Omega)=\Omega}x^{|\Omega|}=
\sum_{K\leq {\C{A}}} \left(\sum_{\Omega\in G(K),\,\alpha(\Omega)=\Omega}x^{|\Omega|}\right)
=\sum_{K\leq {\C{A}}} \left(\sum_{\Omega\in{\rm{Fix}}_\alpha(K) }x^{|\Omega|}\right) .$$
Now, the following lemma easily can be obtained by the M\"obius inversion.

\begin{lem}\label{fix}
Let $\C{A}$ be a finite group and let $\alpha\in \Aut(\C{A})$. Then
$$\sum_{\Omega\in {\rm{Fix}}_\alpha(\C{A})} x^{|\Omega|}
=\sum_{K\le \C{A}} \mu(K) \left(\sum_{\Omega\in S(K),\,\alpha(\Omega)=\Omega}x^{|\Omega|}\right).$$
\end{lem}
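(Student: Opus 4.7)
The plan is to read this identity as a direct M\"obius inversion on the subgroup lattice of $\C{A}$, using the decomposition the authors have essentially set up in the paragraph just before the statement.

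First I would upgrade that decomposition from the top group $\C{A}$ to an arbitrary subgroup $H \le \C{A}$. Setting
$$f(H) = \sum_{\Omega\in S(H),\,\alpha(\Omega)=\Omega} x^{|\Omega|}, \qquad g(K) = \sum_{\Omega \in \Fix_\alpha(K)} x^{|\Omega|},$$
the disjoint-union decomposition $S(H) = \bigcup_{K \le H} G(K)$ (each $\Omega$ belongs uniquely to $G(\langle\Omega\rangle)$, and $\langle\Omega\rangle \le H$) holds verbatim with $H$ in place of $\C{A}$. The same word-for-word argument then gives
$$f(H) = \sum_{K \le H} g(K) \qquad \text{for every } H \le \C{A}.$$
A quick sanity check: the $\alpha$-fixed condition is compatible with the partition, because $\alpha(\Omega) = \Omega$ forces $\alpha(\langle\Omega\rangle) = \langle\Omega\rangle$, so any $\Omega$ counted on the left that generates $K$ is genuinely counted by $g(K)$ on the right; subgroups $K$ with $\alpha(K) \ne K$ automatically have $g(K) = 0$ and contribute nothing.

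Second, I would apply M\"obius inversion on the subgroup lattice. The defining relation $\sum_{H \ge K} \mu(H) = \delta_{K,\C{A}}$ is exactly the statement that $\mu(K)$ is the interval M\"obius value $\mu(K, \C{A})$, so a single interchange of summation yields
$$\sum_{K \le \C{A}} \mu(K)\, f(K) \;=\; \sum_{K \le \C{A}} \mu(K) \sum_{L \le K} g(L) \;=\; \sum_{L \le \C{A}} g(L) \sum_{K \ge L} \mu(K) \;=\; \sum_{L \le \C{A}} g(L)\,\delta_{L,\C{A}} \;=\; g(\C{A}),$$
which, after unpacking the definitions of $f$ and $g$, is precisely the asserted identity.

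The whole argument is mechanical once the first step is in place. The only point that could conceivably require care is verifying that the partition of $S(H)$ by generated subgroup is respected by the condition $\alpha(\Omega) = \Omega$, and this is immediate from $\langle \alpha(\Omega)\rangle = \alpha(\langle \Omega\rangle)$. I do not foresee any real obstacle; the lemma is essentially a templated application of subgroup-lattice M\"obius inversion in the style of Jones' enumeration arguments cited just above.
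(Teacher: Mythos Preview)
Your proposal is correct and matches the paper's approach exactly: the paper sets up the relation $\sum_{\Omega\in S(\C{A}),\,\alpha(\Omega)=\Omega}x^{|\Omega|}=\sum_{K\le\C{A}}\sum_{\Omega\in\Fix_\alpha(K)}x^{|\Omega|}$ and then simply says the lemma ``easily can be obtained by the M\"obius inversion.'' Your write-up is in fact more careful than the paper's, since you explicitly note that the decomposition must hold for every $H\le\C{A}$ (not just $H=\C{A}$) before the inversion step is licensed, and you check that the condition $\alpha(\Omega)=\Omega$ respects the partition by generated subgroup.
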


Using Lemma~\ref{fix}, we can rephrase Theorem~\ref{formul1} as follows.

\begin{thm}\label{formul2}
Let $\C{A}$ be finite group. Then we have
$$\Psi_\C{A}^w(x)=\frac{1}{|\Aut(\C{A})|} \sum_{\alpha\in {\rm{Aut}}(\C{A})}
\left(\sum_{K\le \C{A}} \mu(K) \left(\sum_{\Omega\in S(K),\,\alpha(\Omega)=\Omega}x^{|\Omega|}\right)\right),$$
and
$$\Psi_\C{A}(x)=\frac{1}{|\Inn(\C{A})|}  \sum_{\alpha\in {\rm{Inn}}(\C{A})}
\left(\sum_{K\le \C{A}} \mu(K) \left(\sum_{\Omega\in S(K),\,\alpha(\Omega)=\Omega}x^{|\Omega|}\right)\right).$$
\end{thm}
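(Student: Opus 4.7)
The plan is to obtain Theorem~\ref{formul2} by a direct substitution: the two identities in Theorem~\ref{formul1} already express $\Psi_\C{A}^w(x)$ and $\Psi_\C{A}(x)$ as averages over $\Aut(\C{A})$ and $\Inn(\C{A})$, respectively, of the quantity $\sum_{\Omega\in \Fix_\alpha(\C{A})} x^{|\Omega|}$, while Lemma~\ref{fix} rewrites exactly that inner quantity as $\sum_{K\le\C{A}}\mu(K)\sum_{\Omega\in S(K),\,\alpha(\Omega)=\Omega} x^{|\Omega|}$. First I would fix an arbitrary $\alpha$ and invoke Lemma~\ref{fix} to replace the $\Fix_\alpha(\C{A})$-sum by its M\"obius-weighted counterpart over subgroups $K\le\C{A}$. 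Then I would sum over $\alpha\in \Aut(\C{A})$ and divide by $|\Aut(\C{A})|$ to recover the first formula; repeating the same substitution for $\alpha\in \Inn(\C{A})$ and dividing by $|\Inn(\C{A})|$ yields the second.

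Since every index set (subgroups of $\C{A}$, automorphisms of $\C{A}$, subsets of $\C{A}$) is finite, the finite sums can be reordered freely and no convergence or measurability issues arise; the substitution is therefore purely notational. The only step with genuine content is Lemma~\ref{fix}, which has already been justified via the M\"obius inversion applied to the decomposition $S(\C{A})=\bigcup_{K\le\C{A}} G(K)$ displayed above its statement, so there is no real obstacle in the present argument. In particular, I do not expect any difficulty beyond matching notation: the $\alpha$-invariance condition commutes with the M\"obius inversion because it restricts to each subgroup $K$ (when $\alpha(K)=K$, the $\alpha$-invariant elements of $G(K)$ are exactly $\Fix_\alpha(K)$, and otherwise both sides contribute zero to the corresponding $K$-summand), which is the content of Lemma~\ref{fix} and requires no reproof here.
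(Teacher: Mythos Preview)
Your proposal is correct and matches the paper's own argument exactly: the paper simply states that Theorem~\ref{formul2} is obtained by ``rephrasing'' Theorem~\ref{formul1} via Lemma~\ref{fix}, which is precisely the substitution you describe. Your additional remarks about finiteness and the behavior at non-$\alpha$-invariant subgroups are harmless commentary, since you correctly defer the actual content to Lemma~\ref{fix}.
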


\section{Distribution for equivalence classes} \label{abdn}

In this section, we will find a computation formula for  the polynomial $\Psi_\C{A}(x)$
when $\C{A}$ is a finite abelian group or the dihedral group $D_n$ of order $2n$.
If $\C{A}$ is abelian, then $\Inn(\C{A})$ is trivial and
$$\Psi_{\C{A}}(x)=\sum_{K\le \C{A}} \mu(K) \left(\sum_{\Omega\in S(K)}x^{|\Omega|}\right),$$
by Theorem~\ref{formul2}.
It is not hard to show that
$$\sum_{\Omega\in S(K)}x^{|\Omega|}=\left((1+x^2)^{\frac{|K|-|O_2(K)|-1}{2}}
(1+x)^{|O_2(K)|}-1\right),$$
where $O_2(K)=\{g \in K : g^2=e, g\not= e\}$.
We summarize our discussion as follows.

\begin{thm}\label{eqabel}
For a finite abelian group $\C{A}$,
 $$ \Psi_{\C{A}}(x) = \sum_{K\le \C{A}} \mu(K) \left((1+x^2)^{\frac{|K|-|O_2(K)|-1}{2}}
(1+x)^{|O_2(K)|}-1\right),$$
and hence
$$\C{E}(\C{A}) = \Psi_{\C{A}}(1) = \sum_{K\le \C{A}} \mu(K) \left(2^{\frac{|K|+|O_2(K)|-1}{2}}
-1\right).$$
\end{thm}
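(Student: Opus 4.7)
The plan is to combine Theorem~\ref{formul2} with a direct counting argument for the inner generating function.

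Since $\C{A}$ is abelian, $\Inn(\C{A})$ is trivial, and the only automorphism appearing in Theorem~\ref{formul2} is the identity; for this $\alpha$, the constraint $\alpha(\Omega)=\Omega$ is vacuous. Theorem~\ref{formul2} therefore collapses to
$$\Psi_\C{A}(x) = \sum_{K \le \C{A}} \mu(K) \sum_{\Omega \in S(K)} x^{|\Omega|}.$$
So everything reduces to evaluating the inner sum $\sum_{\Omega\in S(K)}x^{|\Omega|}$ in closed form.

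To do this, I would partition $K\setminus\{e\}$ into the $|O_2(K)|$ involutions and the $(|K|-1-|O_2(K)|)/2$ unordered inverse-pairs $\{g,g^{-1}\}$ with $g\ne g^{-1}$. An inversion-closed subset $\Omega\subseteq K\setminus\{e\}$ is obtained by independently deciding, for each involution, whether to include it (contributing a factor $1+x$) and, for each inverse-pair, whether to include the whole pair or neither element (contributing $1+x^2$). Multiplying these independent choices yields the generating function
$$\sum_{\Omega\in S(K)} x^{|\Omega|} \;=\; (1+x^2)^{\frac{|K|-|O_2(K)|-1}{2}} (1+x)^{|O_2(K)|},$$
where the constant term $1$ corresponds to $\Omega=\emptyset$. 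Since the defining identity $\sum_{H\ge K}\mu(H)=\delta_{K,\C{A}}$ at $K=\{e\}$ gives $\sum_{K\le\C{A}}\mu(K)=0$ for any nontrivial $\C{A}$, subtracting $1$ inside each summand is harmless, which yields the stated form of $\Psi_\C{A}(x)$.

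For the corollary, setting $x=1$ turns each factor into a power of $2$, giving
$$2^{|O_2(K)|} \cdot 2^{\frac{|K|-1-|O_2(K)|}{2}} \;=\; 2^{\frac{|K|+|O_2(K)|-1}{2}},$$
and the formula for $\C{E}(\C{A})=\Psi_\C{A}(1)$ follows. There is no real obstacle; the only mild subtlety is the $-1$ bookkeeping in the inner sum, which is absorbed by the vanishing of $\sum_K\mu(K)$.
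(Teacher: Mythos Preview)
Your proof is correct and follows essentially the same route as the paper: reduce via Theorem~\ref{formul2} using the triviality of $\Inn(\C A)$, then compute $\sum_{\Omega\in S(K)}x^{|\Omega|}$ by splitting $K\setminus\{e\}$ into involutions and inverse-pairs. The only cosmetic difference is that the paper writes the inner sum directly with the $-1$ (effectively treating $\emptyset\notin S(K)$), whereas you include $\emptyset$ and then absorb the $-1$ via $\sum_{K\le\C A}\mu(K)=0$; both lead to the same formula.
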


Let $n$ be a positive integer and $\mu$ be the number theoretical mu-function.
Since a subgroup of the cyclic group $\B{Z}_n$ is also cyclic, say $\B{Z}_m$ with
$m|n$ and that $\mu(\B{Z}_m)=\mu(\frac{n}{m})$. Since
$|O_2(\B{Z}_m)|={\frac{1+(-1)^m}{2}}$,
we have the following corollary from Theorem~\ref{eqabel}.

\begin{cor}\label{eqcyclic}
For any positive integer $n$,
 $$ \Psi_{\B{Z}_n}(x) = \sum_{d|n} \mu\left(\dfrac{n}{d}\right) \left((1+x^2)^{\lfloor \frac{d-1}{2} \rfloor}
(1+x)^{\frac{1+(-1)^d}{2}}-1 \right),$$
and hence
$$\C{E}(\B{Z}_n) = \Psi_{\B{Z}_n}(1) = \sum_{d|n} \mu\left(\dfrac{n}{d}\right) \left(2^{\lfloor \frac{d}{2} \rfloor}
-1 \right).$$
\end{cor}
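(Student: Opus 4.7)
The plan is to deduce Corollary~\ref{eqcyclic} as a direct specialization of Theorem~\ref{eqabel} with $\C{A}=\B{Z}_n$. The first step is to set up the bijection between subgroups of $\B{Z}_n$ and divisors of $n$: each divisor $d\mid n$ corresponds uniquely to the cyclic subgroup $\B{Z}_d$ of order $d$, and the containment $\B{Z}_{d'}\leq \B{Z}_d$ holds exactly when $d'\mid d$. So the index set $\{K\leq \B{Z}_n\}$ in Theorem~\ref{eqabel} becomes $\{d:d\mid n\}$.

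Next I would identify the subgroup-lattice M\"obius function $\mu(\B{Z}_d)$ with the classical number-theoretic $\mu(n/d)$. By definition, $\mu(\B{Z}_d)$ is determined by $\sum_{\B{Z}_e\geq \B{Z}_d}\mu(\B{Z}_e)=\delta_{d,n}$; writing $e=dk$ with $k\mid n/d$ and setting $m=n/d$, the defining relation becomes $\sum_{k\mid m}\mu(\B{Z}_{dk})=[m=1]$. The classical identity $\sum_{j\mid m}\mu(j)=[m=1]$ shows that $\mu(\B{Z}_e)=\mu(n/e)$ solves the recursion, and uniqueness of the M\"obius function on a finite poset finishes this identification.

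I would then evaluate the two parameters appearing inside the sum for $K=\B{Z}_d$. An element of order $2$ in $\B{Z}_d$ exists (uniquely) iff $d$ is even, giving $|O_2(\B{Z}_d)|=\tfrac{1+(-1)^d}{2}$. A short parity case-check shows
$$\frac{|K|-|O_2(K)|-1}{2}=\frac{d-\tfrac{1+(-1)^d}{2}-1}{2}=\left\lfloor\frac{d-1}{2}\right\rfloor,$$
and the exponent of $(1+x)$ is $|O_2(\B{Z}_d)|=\tfrac{1+(-1)^d}{2}$, exactly matching the right-hand side of the claimed formula. Substituting everything into Theorem~\ref{eqabel} yields the stated expression for $\Psi_{\B{Z}_n}(x)$.

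Finally, for the enumeration formula, I would specialize at $x=1$. Evaluating $(1+x^2)^{\lfloor (d-1)/2\rfloor}(1+x)^{(1+(-1)^d)/2}$ at $x=1$ gives $2^{\lfloor (d-1)/2\rfloor+(1+(-1)^d)/2}$, and a one-line parity check verifies that this exponent equals $\lfloor d/2\rfloor$ for both even and odd $d$. This yields $\C{E}(\B{Z}_n)=\sum_{d\mid n}\mu(n/d)(2^{\lfloor d/2\rfloor}-1)$. There is no serious obstacle here: the only place requiring care is the translation of the subgroup-lattice M\"obius function into the classical $\mu$ and the two parity reductions, both of which are routine.
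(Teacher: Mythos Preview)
Your proposal is correct and follows essentially the same approach as the paper: specialize Theorem~\ref{eqabel} to $\C A=\B Z_n$, use that subgroups of $\B Z_n$ are exactly the $\B Z_d$ with $d\mid n$, identify $\mu(\B Z_d)=\mu(n/d)$, and compute $|O_2(\B Z_d)|=\tfrac{1+(-1)^d}{2}$. The paper states these facts in one sentence without justification, whereas you spell out the M\"obius-function identification and the parity reductions in more detail, but the argument is the same.
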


Now we aim to compute $\Psi_{\B{D}_n}(x)$, where $\B{D}_n=\{a, b: a^n=1, b^2=1, bab=a^{-1}\}$ is the dihedral group  of order $2n$.
Notice that $\Inn(\B{D}_n)$ is the set $\{\alpha_k, \beta_k: k=1,2,\ldots, n\}$, where $\alpha_k(a)=a^{-k}aa^k=a$, $\alpha_k(b)=a^{-k}ba^k=ba^{2k}$,
and $\beta_k(a)=(ba^k) a  (ba^k)=a^{-1}$, $\beta_k(b)=(ba^k) b  (ba^k)=ba^{2k}$
for each $ k=1,2,\ldots, n$.
Each  subgroup of the dihedral group $\B{D}_n$ is isomorphic to either
$\B{Z}_m$ or $\B{D}_m$ for some $m|n$.  There are exactly $\frac{n}{m}$ subgroups
isomorphic to $\B{D}_m$ and only one subgroup isomorphic to $\B{Z}_m$, where
$\B{D}_1$ is the subgroup generated by a reflection and $\B{D}_2$ is a subgroup isomorphic to
$\B{Z}_2\oplus \B{Z}_2$. Moreover, $\mu(\B{Z}_m)=-\frac{n}{m} \mu(\frac{n}{m})$,
$\mu(\B{D}_m)= \mu(\frac{n}{m})$, and  for each $m|n$ we can list the  $\frac{n}{m}$ subgroups
isomorphic to $\B{D}_m$ as follows: for each fixed $\ell=1,2,\ldots, \frac{n}{m}$,
$\B{D}_m(\ell)=\{a^{s\frac{n}{m}},ba^{s\frac{n}{m}+\ell}: s=1,2,\ldots,m\}$.
Then for each divisor $m$ of $n$, we can see that

\begin{align*}
&\sum_{\Omega\in S(\B{D}_m(\ell)),\,\alpha_k(\Omega)=\Omega}x^{|\Omega|}\\
&= \begin{cases}
(1+x^2)^{\alpha}
(1+x)^\frac{1+(-1)^m}{2} \left(1+x^{\frac{m}{\left(\frac{2km}{n},~ m\right)}}\right)^{\left(\frac{2km}{n},~ m\right)} -1
 & \mbox{ if $2k\equiv 0$ (mod  $\frac{n}{m}$),}\\
(1+x^2)^{\alpha}
(1+x)^\frac{1+(-1)^m}{2} -1  & \mbox{ otherwise,}
\end{cases} \end{align*}
and that
\begin{align*}
& \sum_{\Omega\in S(\B{D}_m(\ell)),\,\beta_k(\Omega)=\Omega}x^{|\Omega|}\\
&= \begin{cases}
(1+x^2)^{\alpha} (1+x)^\frac{1+(-1)^m}{2}
(1+x^2)^{\alpha} (1+x)^\frac{3+(-1)^m}{2}  -1
 & \mbox{ if $2(k-\ell)\equiv 0$ (mod  $\frac{n}{m}$),}\\
(1+x^2)^{\alpha}
(1+x)^\frac{1+(-1)^m}{2} -1  & \mbox{ otherwise,}
\end{cases} \end{align*}
where $\alpha = \lfloor \frac{m-1}{2} \rfloor$.

Similarly, we can see that
\begin{align*}
\sum_{\Omega\in S(\B{Z}_m),\,\alpha_k(\Omega)=\Omega}x^{|\Omega|}
&=\sum_{\Omega\in S(\B{Z}_m),\,\beta_k(\Omega)=\Omega}x^{|\Omega|}
=    (1+x^2)^{\lfloor \frac{m-1}{2} \rfloor}
(1+x)^\frac{1+(-1)^m}{2} -1\\
&= \begin{cases} (1+x^2)^{\frac{m-1}{2}}-1 & \mbox{ if $m$ is odd,}\\
  (1+x^2)^{\frac{m-2}{2}}(1+x)  -1
& \mbox{ if $m$ is even.}
\end{cases} \end{align*}

Now, Theorem~\ref{eqdn} follows from the above discussion and Theorem~\ref{formul2}.

\begin{thm}\label{eqdn}
Let $\B{D}_n$ be the dihedral group of order $2n$. Then we have
\begin{align*} 2n \Psi_{\B{D}_n}(x) =&
  \sum_{k=1}^n
\sum_{m|n} \left[-2\frac{n}{m}~\mu\left(\frac{n}{m}\right)
\left( (1+x^2)^{\gamma}
(1+x)^\frac{1+(-1)^m}{2} -1\right)\right.\\
&  +\left.  \mu\left(\frac{n}{m}\right) \sum_{\ell=1}^\frac{n}{m}
\left(F_{\alpha_k}(\ell, m) + F_{\beta_k}(\ell, m)\right)\right],
\end{align*}
where
\begin{align*}
& F_{\alpha_k}(\ell, m)\\
&= \begin{cases}  (1+x^2)^{\gamma}
(1+x)^\frac{1+(-1)^m}{2} \left(1+x^{\frac{m}{\left(\frac{2km}{n}, m\right)}}\right)^{\left(\frac{2km}{n}, m\right)} -1
 & \mbox{\rm if $2k\equiv0$ (mod  $\frac{n}{m}$),}\\
 (1+x^2)^{\gamma}
(1+x)^\frac{1+(-1)^m}{2} -1  & \mbox{\rm otherwise,}
\end{cases} \end{align*}
and
\begin{align*}
& F_{\beta_k}(\ell, m)\\[1ex]
&= \begin{cases} (1+x^2)^{\gamma} (1+x)^\frac{1+(-1)^m}{2}
(1+x^2)^{\gamma} (1+x)^\frac{3+(-1)^m}{2}  -1
 & \mbox{\rm if $2(k-\ell)\equiv0$ (mod  $\frac{n}{m}$),}\\
  (1+x^2)^{\gamma}
(1+x)^\frac{1+(-1)^m}{2} -1  & \mbox{\rm otherwise,}
\end{cases} \end{align*}
where $\gamma = \lfloor \frac{m-1}{2} \rfloor$.
\end{thm}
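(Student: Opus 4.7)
The plan is to specialize Theorem~\ref{formul2} to $\C{A}=\B{D}_n$ and then carry out three bookkeeping tasks: enumerate $\Inn(\B{D}_n)$, describe its subgroup lattice with Möbius values, and evaluate the fixed-set generating function $\sum_{\Omega\in S(K),\,\varphi(\Omega)=\Omega}x^{|\Omega|}$ for every relevant pair $(K,\varphi)$. Once these three ingredients are in hand, the theorem reduces to a mechanical substitution.

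First I would record $\Inn(\B{D}_n)$ concretely as the list of $2n$ automorphisms $\alpha_k,\beta_k$ with $k=1,\dots,n$, where $\alpha_k$ is conjugation by $a^k$ and $\beta_k$ is conjugation by $ba^k$; for even $n$ each automorphism appears twice in this list, but that multiplicity is exactly absorbed by the prefactor $1/(2n)$. Next I would invoke the standard subgroup description of the dihedral group, namely that for each divisor $m$ of $n$ there is a unique cyclic subgroup $\B{Z}_m=\langle a^{n/m}\rangle$ and exactly $n/m$ dihedral subgroups $\B{D}_m(\ell)$, and derive the Möbius values $\mu(\B{Z}_m)=-(n/m)\mu(n/m)$ and $\mu(\B{D}_m(\ell))=\mu(n/m)$ from the defining recursion by induction on $n/m$.

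The real work is the fix-sum computation. For $K=\B{Z}_m$, the restriction of $\alpha_k$ is trivial and $\beta_k$ acts by inversion $a^j\mapsto a^{-j}$, so every inversion-closed subset is fixed by both; this reduces directly to the abelian case already treated in Theorem~\ref{eqabel}. For $K=\B{D}_m(\ell)$, I would split an inversion-closed $\Omega$ into its rotation part in $\langle a^{n/m}\rangle$ and its reflection part in $\{ba^{sn/m+\ell}\}$; the rotation part contributes the same $\B{Z}_m$ factor as above, while the reflection part requires determining when $\alpha_k$ (respectively $\beta_k$) maps $\B{D}_m(\ell)$ to itself, and, if so, the orbit structure of the induced permutation on the $m$ reflections. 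The action of $\alpha_k$ sends $ba^j\mapsto ba^{j+2k}$, so it preserves $\B{D}_m(\ell)$ precisely when $2k\equiv 0\pmod{n/m}$, and then acts as a cyclic shift of step $2km/n$ on $\B{Z}_m$, yielding $\gcd(2km/n,m)$ orbits of equal length and the factor $(1+x^{m/d})^d$ with $d=\gcd(2km/n,m)$. The action of $\beta_k$ sends $ba^j\mapsto ba^{2k-j}$, so it preserves $\B{D}_m(\ell)$ precisely when $2(k-\ell)\equiv 0\pmod{n/m}$, and then acts as the involution $s\mapsto c-s$ on $\B{Z}_m$ with $c=2(k-\ell)m/n$, whose orbit-count analysis produces the paired factor matching $F_{\beta_k}(\ell,m)$. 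The main technical obstacle here is the orbit bookkeeping together with careful tracking of the two divisibility conditions; this is the step most likely to introduce indexing errors.

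Finally, assembling the master formula is routine. For fixed $k$ and $m$, the unique $\B{Z}_m$ contributes $\mu(\B{Z}_m)\bigl[(1+x^2)^\gamma(1+x)^{(1+(-1)^m)/2}-1\bigr]$ twice, once for $\alpha_k$ and once for $\beta_k$, producing the coefficient $-2(n/m)\mu(n/m)$ displayed in the theorem, while the $n/m$ dihedral subgroups $\B{D}_m(\ell)$ each contribute $\mu(n/m)\bigl(F_{\alpha_k}(\ell,m)+F_{\beta_k}(\ell,m)\bigr)$. Summing over $k=1,\dots,n$ and $m\mid n$ and moving the normalizer $|\Inn(\B{D}_n)|=2n$ to the left-hand side gives the stated identity.
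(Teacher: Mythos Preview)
Your proposal is correct and follows essentially the same route as the paper: enumerate $\Inn(\B{D}_n)=\{\alpha_k,\beta_k\}$, list the subgroups $\B{Z}_m$ and $\B{D}_m(\ell)$ with their M\"obius values, compute each fixed-set generating function by splitting into rotation and reflection parts, and substitute into Theorem~\ref{formul2}. Your orbit analysis for the reflection parts and your remark on the double-counting when $n$ is even are in fact more explicit than what the paper records, but the structure is identical.
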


\begin{cor}\label{eqdncount}
Let $\B{D}_n$ be the dihedral group of order $2n$. Now we have
 $$\C{E}(\B{D}_n) = \Psi_{\B{D}_n}(1) = \sum_{m|n} (2, \frac{n}{m}) \mu\left(\frac{n}{m}\right) 2^{\lfloor \frac{m}{2} \rfloor}
\left( 2^{\left(\frac{2km}{n}, m\right)-1} + 2^{\lfloor \frac{m}{2} \rfloor} -1 \right).$$
\end{cor}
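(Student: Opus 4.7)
The plan is to substitute $x=1$ into Theorem~\ref{eqdn} and simplify term by term. The key algebraic observation is that $(1+x^2)^{\gamma}(1+x)^{(1+(-1)^m)/2}$ evaluates at $x=1$ to $2^{\lfloor m/2 \rfloor}$, because $\gamma+(1+(-1)^m)/2=\lfloor(m-1)/2\rfloor+(1+(-1)^m)/2=\lfloor m/2 \rfloor$ for every positive integer $m$. This immediately collapses the common prefactor appearing in both $F_{\alpha_k}$ and $F_{\beta_k}$, and the $\B{Z}_m$-subgroup term reduces to $2^{\lfloor m/2 \rfloor}-1$.

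Next I would evaluate the two nontrivial branches at $x=1$. The $F_{\alpha_k}$-branch becomes $2^{\lfloor m/2 \rfloor}\cdot 2^{(2km/n,\,m)}-1$ when $2k\equiv 0\pmod{n/m}$, and the $F_{\beta_k}$-branch becomes $2^{\lfloor m/2 \rfloor}\cdot 2^{\lfloor m/2 \rfloor+1}-1$ when $2(k-\ell)\equiv 0\pmod{n/m}$. I would then carry out the outer sums over $k\in\{1,\dots,n\}$ and $\ell\in\{1,\dots,n/m\}$. The congruence $2k\equiv 0\pmod{n/m}$ has exactly $(2,n/m)$ solutions modulo $n/m$, hence is satisfied by $m(2,n/m)$ values of $k$ in $\{1,\dots,n\}$; an identical count holds for $2(k-\ell)\equiv 0\pmod{n/m}$ for each fixed $\ell$.

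Assembling the three contributions (the $\B{Z}_m$ term, and the $F_{\alpha_k}$ and $F_{\beta_k}$ terms summed in the two cases), the ``otherwise'' branches combine with $-2(n/m)\mu(n/m)\bigl(2^{\lfloor m/2\rfloor}-1\bigr)$ so that the $-1$'s telescope, leaving only the nontrivial case contributions. Dividing by $2n$ and rearranging the sum by the divisor $m$ of $n$ yields the stated closed form indexed by $m\mid n$, with the factor $(2,n/m)\mu(n/m)2^{\lfloor m/2\rfloor}$ and the bracket $\bigl(2^{(2km/n,\,m)-1}+2^{\lfloor m/2\rfloor}-1\bigr)$ as the residue of the $\alpha_k$- and $\beta_k$-pieces respectively.

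The main obstacle is the bookkeeping for the $F_{\alpha_k}$ branch: unlike in $F_{\beta_k}$, the exponent $(2km/n,\,m)$ genuinely depends on which $k$ realises the congruence, so one cannot treat it as constant when summing over $k$. The careful point is therefore to parametrise the solution set $\{k:2k\equiv 0\pmod{n/m}\}$ and see how the exponents $(2km/n,\,m)$ are distributed; once this dependence is accounted for, the remaining manipulations are the routine cancellations described above.
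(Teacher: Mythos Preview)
Your approach is essentially the same as the paper's: substitute $x=1$ into Theorem~\ref{eqdn}, collapse the common factor $(1+x^2)^{\gamma}(1+x)^{(1+(-1)^m)/2}$ to $2^{\lfloor m/2\rfloor}$, count the $k$'s (respectively pairs $(k,\ell)$) satisfying the congruence by the factor $(2,n/m)m$, and then cancel the ``otherwise'' branches against the $\B{Z}_m$-term before dividing by $2n$. The paper's own computation follows exactly these four steps in the same order.

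The obstacle you flag --- that the exponent $\bigl(\tfrac{2km}{n},m\bigr)$ in the $F_{\alpha_k}$-branch genuinely depends on which $k$ solves $2k\equiv 0\pmod{n/m}$ --- is real, and the paper does \emph{not} resolve it: in the displayed chain of equalities the sum over $k$ is carried out while $\bigl(\tfrac{2km}{n},m\bigr)$ is treated as if it were constant, and indeed the symbol $k$ survives as a free variable in the final statement of the corollary. So you have correctly located a gap that the paper's proof (and the stated formula itself) leaves open; your parametrisation idea for the solution set $\{k:2k\equiv 0\pmod{n/m}\}$ is exactly what would be needed to make this step rigorous, and is more than the paper provides.
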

\begin{proof}
Since $\C{E}(\B{D}_n) = \Psi_{\B{D}_n}(1)$, we have

\begin{align*} \C{E}(\B{D}_n)  =& \frac{1}{2n}
 \sum_{m|n} \left[-2n\frac{n}{m}\,\mu\left(\frac{n}{m}\right)
\left( 2^{\lfloor \frac{m}{2} \rfloor}
 -1\right)+ \mu\left(\frac{n}{m}\right)  \sum_{k=1}^n \sum_{\ell=1}^\frac{n}{m}
\left(F_{\alpha_k}(\ell, m) + F_{\beta_k}(\ell, m)\right)\right] \\
=& \frac{1}{2n}
 \sum_{m|n} \left[-2n\frac{n}{m}\,\mu\left(\frac{n}{m}\right)
\left( 2^{\lfloor \frac{m}{2} \rfloor}
 -1\right)\right.\\
& +  \mu\left(\frac{n}{m}\right)  \sum_{\ell=1}^\frac{n}{m}
\left((2, \frac{n}{m})m \left(2^{\lfloor \frac{m}{2} \rfloor + \left(\frac{2km}{n}, m\right)} + 2^{\lfloor \frac{m}{2} \rfloor + \lfloor \frac{m+2}{2} \rfloor}\right)  \right.\\
 &   +\left.  \left.
\left( n -(2, \frac{n}{m})m \right)2^{\lfloor \frac{m}{2} \rfloor+1} - 2n \right)\right] \\
= &\frac{1}{2n}
 \sum_{m|n} \left[-2n\frac{n}{m}\,\mu\left(\frac{n}{m}\right)
\left( 2^{\lfloor \frac{m}{2} \rfloor}
 -1\right)\right.\\
&     +\left.  \frac{n}{m}\mu\left(\frac{n}{m}\right)
\left((2, \frac{n}{m})m 2^{\lfloor \frac{m}{2} \rfloor} \left(2^{\left(\frac{2km}{n}, m\right)} + 2^{\lfloor \frac{m+2}{2} \rfloor}
-2 \right) +
 2 n 2^{\lfloor \frac{m}{2} \rfloor} - 2n \right)\right] \\
 =&  \sum_{m|n} (2, \frac{n}{m}) \mu\left(\frac{n}{m}\right) 2^{\lfloor \frac{m}{2} \rfloor}
\left( 2^{\left(\frac{2km}{n}, m\right)-1} + 2^{\lfloor \frac{m}{2} \rfloor} -1 \right).
\end{align*}
\end{proof}

\section{Degree distribution polynomials for some circulant graphs}\label{DistC}

In this section, we compute the degree distributions polynomials for the weak equivalence classes
of some circulant graphs of prime power in subsection~\ref{pp}, of order $4p$ in subsection~\ref{4pc}
and of square free order in subsection~\ref{sqfree}.

\subsection{Degree distribution polynomials for circulant graphs of prime power order}\label{pp}

For a prime $p$, let $\B{Z}_{p^m}$ be the cyclic group of order $p^m$.
For each $k=0,1,2, \ldots, m-1$, let $A_k=\{ s \in \B{Z}_{p^m} : (s,p^m)=p^k\}$,
 where $(s,t)$ is the greatest common divisor of the positive integers $s$ and $t$.
Indeed, $A_0$ is $\Aut(\B{Z}_{p^m})$, the set of all automorphisms of
$\B{Z}_{p^m}$. Then $|A_k|= \phi(p^{m-k})=p^{m-k-1}\phi(p)$, where $\phi$ is the Euler pi-function. Note that
 $$G(\C{\B{Z}_{p^m}})=\{ \Omega: -\Omega = \Omega,  \Omega\subset \bigcup_{k=0}^{m-1}A_k \} - \{ \Omega: -\Omega = \Omega,  \Omega\subset \bigcup_{k=1}^{m-1}A_k \}. $$
For each $k=0,1,\ldots, m-1$, let $X_k=\{ \Omega: \Omega^{-1}=-\Omega=\Omega, \Omega\subset A_k\}$.
Now $X_k$ is an invariant subset of $\Aut(\B{Z}_{p^m})$-action.
For  an automorphism  $\alpha$   in $\Aut(\B{Z}_{p^m})$, we observe that
 $$\sum_{\Omega \in \bigcup_{k=0}^{m-1}A_k, -\Omega = \Omega, \alpha(\Omega)=\Omega} x^{|\Omega|}=
\left[\prod_{k=0}^{m-1} \left(1+\sum_{\Omega\in X_k, \alpha(\Omega)=\Omega} x^{|\Omega|}
\right)\right]-1,$$  and
$$\sum_{\Omega \in \bigcup_{k=1}^{m-1}A_k, -\Omega = \Omega, \alpha(\Omega)=\Omega} x^{|\Omega|}=
\left[\prod_{k=1}^{m-1} \left(1+\sum_{\Omega\in X_k, \alpha(\Omega)=\Omega} x^{|\Omega|}
\right)\right]-1.$$
Thus, we have the following lemma.

\begin{lem}\label{fixalpha}
Let  $\alpha$ be an automorphism in $\Aut(\B{Z}_{p^m})$. Then we have
$$\sum_{\Omega\in \Fix_\alpha} x^{|\Omega|}=
\left(\sum_{\Omega\in X_0, \alpha(\Omega)=\Omega} x^{|\Omega|}\right)
\prod_{k=1}^{m-1} \left(1+\sum_{\Omega\in X_k, \alpha(\Omega)=\Omega} x^{|\Omega|}
\right).$$
\end{lem}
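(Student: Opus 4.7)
The plan is to read the lemma off as a one-line consequence of the two displayed identities that appear immediately above its statement, together with a set-theoretic description of $\Fix_\alpha$. Let me abbreviate $S_k := \sum_{\Omega\in X_k,\,\alpha(\Omega)=\Omega} x^{|\Omega|}$ for $k=0,1,\ldots,m-1$. The first step is to characterize which symmetric $\alpha$-invariant subsets of $\B{Z}_{p^m}\setminus\{0\}$ actually lie in $G(\B{Z}_{p^m})$, i.e., generate $\B{Z}_{p^m}$. Since $s\in\B{Z}_{p^m}$ generates $\B{Z}_{p^m}$ iff $\gcd(s,p^m)=1$, i.e., $s\in A_0$, and since $\bigcup_{k\ge 1}A_k$ is contained in the proper subgroup $p\B{Z}_{p^m}$, a subset $\Omega\subset\B{Z}_{p^m}\setminus\{0\}$ generates $\B{Z}_{p^m}$ exactly when $\Omega\cap A_0\ne\emptyset$. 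Consequently
$$\Fix_\alpha \;=\; \{\Omega : -\Omega=\Omega,\ \alpha(\Omega)=\Omega,\ \Omega\subset\textstyle\bigcup_{k=0}^{m-1}A_k\}\ \setminus\ \{\Omega : -\Omega=\Omega,\ \alpha(\Omega)=\Omega,\ \Omega\subset\textstyle\bigcup_{k=1}^{m-1}A_k\}.$$

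The second step is just to subtract and factor. Applying the two displayed identities that precede the lemma to the two families above gives
$$\sum_{\Omega\in\Fix_\alpha} x^{|\Omega|} \;=\; \Big[\prod_{k=0}^{m-1}(1+S_k)-1\Big]-\Big[\prod_{k=1}^{m-1}(1+S_k)-1\Big] \;=\; \big[(1+S_0)-1\big]\prod_{k=1}^{m-1}(1+S_k) \;=\; S_0\prod_{k=1}^{m-1}(1+S_k),$$
which is exactly the claimed formula.

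The only thing that needs genuine justification beyond bookkeeping is the validity of the two preceding product displays, which I would cite (or briefly reprove) as follows. Each $A_k$ is invariant under $s\mapsto -s$ (trivially) and under any $\alpha\in\Aut(\B{Z}_{p^m})$, because $\alpha$ preserves element orders and $A_k$ is precisely the set of elements of additive order $p^{m-k}$. Hence any symmetric $\alpha$-invariant $\Omega\subset\bigcup_{k=0}^{m-1}A_k$ splits uniquely and independently as a disjoint union $\Omega=\bigsqcup_k\Omega_k$ with $\Omega_k\in X_k$ and $\alpha(\Omega_k)=\Omega_k$, which expands the generating function into the stated product (the outer $-1$ removes the all-empty term). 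I do not expect any serious obstacle in this proof; the whole argument is a short inclusion-exclusion on top of a product expansion, and the generation condition collapses neatly into the single factor $S_0$.
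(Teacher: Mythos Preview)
Your proof is correct and follows essentially the same route as the paper: the paper records the set-theoretic description of $G(\B{Z}_{p^m})$ as a difference, states the two product identities, and then declares the lemma as an immediate consequence; your write-up makes the subtraction-and-factoring step explicit and supplies the short justification (invariance of each $A_k$ under $-$ and under $\alpha$, and the generation criterion $\Omega\cap A_0\neq\emptyset$) that the paper leaves implicit.
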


Now, we aim to compute $\sum_{\Omega\in X_k, \alpha(\Omega)=\Omega} x^{|\Omega|}$ for each $\alpha\in \Aut(\C{\B{Z}_{p^m}})$
and each $k=0,1, \ldots, m-1$.
First we consider $p=2$.  Note that $\Aut(\B{Z}_{2^m})$ is isomorphic to $\B{Z}_2\times \B{Z}_{2^{m-2}}$ and that
the multiplicative group $A_0$ is equal to the group  $\Aut(\B{Z}_{2^m})$. Let $\eta:A_0 \to \B{Z}_2\times \B{Z}_{2^{(m-2)}}$
be the isomorphism. Then $\eta(-1)=(1,0)$ and hence,
it naturally induces an isomorphism $\bar{\eta}: A_0/\B{Z}_2 \to \B{Z}_{2^{(m-2)}}$.
So any subset $\Omega$ of $A_0$ satisfying $\Omega=-\Omega$ corresponds to a subset of $\B{Z}_{2^{(m-2)}}$ and vice versa via an isomorphism $\bar{\eta}$, namely
there is an one to one correspondence between $X_0$ and $\C{P}(\B{Z}_{2^{(m-2)}})$, where $\C{P}(\B{Z}_{2^{(m-2)}})$ is
the powerset of $\B{Z}_{2^{(m-2)}}$. Furthermore, $\Aut(\B{Z}_{2^m})/\B{Z}_2$-action
on $X_0$ is equivalent to the natural $\B{Z}_{2^{(m-2)}}$-action on $\C{P}(\B{Z}_{2^{(m-2)}})$.
Therefore, for each $\alpha \in A_0=\Aut(\B{Z}_{2^m})$, we can see that
$$\sum_{\Omega\in X_0, \alpha(\Omega)=\Omega} x^{|\Omega|} = \sum_{\bar\Omega\in \bar{\eta}(X_0),\, \bar{\eta}(\alpha)(\bar\Omega)=\bar\Omega}
x^{2|\bar\Omega|}=
\left(1+x^{\frac{2^{m-1}}{\left(\bar{\eta}(\alpha),\, 2^{m-2}\right)}}\right)^{\left(\bar{\eta}(\alpha),\, 2^{m-2}\right)}-1$$
and $\sum_{\Omega\in X_{m-1}, \alpha(\Omega)=\Omega} x^{|\Omega|} = x$.
By a method similar to the case $k=0$, for each $k=1,2, \ldots, m-2$, we can see that
$$\sum_{\Omega\in X_k, \alpha(\Omega)=\Omega} x^{|\Omega|} =
\left(1+x^{\frac{2^{m-k-1}}{\left(\bar{\eta}(\alpha),\, 2^{m-k-2}\right)}}\right)^{\left(\bar{\eta}(\alpha),\, 2^{m-k-2}\right)}-1.$$

Now, by Theorem~\ref{formul1} and Lemma~\ref{fixalpha},

\begin{align*}
&2^{m-1}~ \Psi_{\B{Z}_{2^m}}^w(x) = \displaystyle
 \sum_{\alpha\in {\rm{Aut}}(\B{Z}_{2^m})} \sum_{\Omega\in {\rm{Fix}}_\alpha} x^{|\Omega|} \\
&=  \sum_{\alpha\in {\rm{Aut}}(\B{Z}_{2^m})}
\left[(1+x^{\frac{2^{m-1}}{\left(\bar{\eta}(\alpha),\, 2^{m-2}\right)}})^{\left(\bar{\eta}(\alpha),\, 2^{m-2}\right)}-1\right]
\left(\prod_{k=1}^{m-2}\left(1+x^{\frac{2^{m-k-1}}{\left(\bar{\eta}(\alpha),\, 2^{m-k-2}\right)}}\right)^{\left(\bar{\eta}(\alpha),\, 2^{m-k-2}\right)}\right)(1+x)
 \\
&= 2 \sum_{a\in\B{Z}_{2^{m-2}}}
\left[(1+x^{\frac{2^{m-1}}{\left(a,\, 2^{m-2}\right)}})^{\left(a,\, 2^{m-2}\right)}-1\right](1+x)
\prod_{k=1}^{m-2}\left(1+x^{\frac{2^{m-k-1}}{\left(a,\, 2^{m-k-2}\right)}}\right)^{\left(a,\, 2^{m-k-2}\right)}
 \\
&= 2  \sum_{d|2^{m-2}} \phi(2^{m-2}/d)
\left[\left(1+x^{\frac{2^{m-1}}{d}}\right)^{d}-1\right](1+x)
\prod_{k=1}^{m-2}\left(1+x^{\frac{2^{m-k-1}}{\left(d,\, 2^{m-k-2}\right)}}\right)^{\left(d,\, 2^{m-k-2}\right)} \\
&= 2  \sum_{\ell=0}^{m-2} \phi(2^{m-\ell-2})
\left[\left(1+x^{2^{m-\ell-1}}\right)^{2^\ell}-1\right](1+x)
\prod_{k=1}^{m-2}\left(1+x^{\frac{2^{m-k-1}}{\left(2^\ell,\, 2^{m-k-2}\right)}}\right)^{\left(2^\ell,\, 2^{m-k-2}\right)}.
\end{align*}

We summarize the above discussions as follow.

\begin{thm}\label{2^m}
For each $m\ge 2$, we have
\begin{align*}
& 2^{m-2} \Psi_{\B{Z}_{2^m}}^w(x) \\
& = \sum_{t=0}^{m-2} \phi(2^t)
\left(\left(1+x^{2^{t+1}}\right)^{2^{m-t-2}}-1\right)(1+x)
\prod_{s = t+1}^{m-2} \left(1+x^2\right)^{2^{m-s-2}}
\prod_{s=1}^{t}\left(1+x^{2^{t-s+1}}\right)^{2^{m-t-2}},
\end{align*}
and hence
\begin{align*}
\C{E}^w(\B{Z}_{2^m}) &= \Psi_{\B{Z}_{2^m}}^w(1)=\frac{1}{2^{m-2}}  \sum_{t=0}^{m-2}\phi(2^t)
\left(2^{1+ 2^{m-t-2}}-2\right)
\prod_{s =t+1}^{m-2} 2^{2^{m-s-2}}
\prod_{s=1}^{t}2^{2^{m-t-2}} \\
    &= \frac{1}{2^{m-2}}\left[ 2^{2^{m-1}}(2^{2^{m-2}}-1) +  \sum_{t=1}^{m-2}2^{(t+1)2^{m-t-2}+ t-1}
\left(2^{2^{m-t-2}}-1\right)  \right],
\end{align*}
where the product of the empty index set is defined to be $1$.
\end{thm}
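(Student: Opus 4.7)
The plan is to specialize the general machinery of Section~\ref{char} to $\C{A} = \B{Z}_{2^m}$ using the stratification of $G(\B{Z}_{2^m})$ into pieces indexed by $A_0, \ldots, A_{m-1}$ set up before the statement. First I would apply Theorem~\ref{formul1} to write $|\Aut(\B{Z}_{2^m})|\, \Psi_{\B{Z}_{2^m}}^w(x) = \sum_{\alpha} \sum_{\Omega \in \Fix_\alpha} x^{|\Omega|}$ with $|\Aut(\B{Z}_{2^m})| = 2^{m-1}$, then invoke Lemma~\ref{fixalpha} to factor the inner generating function as the $k=0$ contribution times $\prod_{k=1}^{m-1}\bigl(1+\text{($k$-th contribution)}\bigr)$. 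Substituting the explicit evaluations already established for $\sum_{\Omega \in X_k,\,\alpha(\Omega)=\Omega} x^{|\Omega|}$ (namely $x$ when $k = m-1$, and the gcd-power expression in $\bar\eta(\alpha)$ when $0 \le k \le m-2$) turns the summand into a product depending on $\alpha$ only through $\bar\eta(\alpha)$.

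Next I would pass from $\Aut(\B{Z}_{2^m})$ to $\B{Z}_{2^{m-2}}$ via the two-to-one surjection induced by $\bar\eta$, which accounts for the overall factor of $2$ and also produces the separate $(1+x)$ factor from the $k = m-1$ term. Since every gcd appearing in the product is of the form $(a,\, 2^j)$, grouping $a \in \B{Z}_{2^{m-2}}$ by $d = (a,\, 2^{m-2})$ and using the identity $|\{a : (a,\, 2^{m-2}) = d\}| = \phi(2^{m-2}/d)$ collapses the sum into a sum over divisors $d$ of $2^{m-2}$. Writing $d = 2^\ell$ for $\ell \in \{0, 1, \ldots, m-2\}$ reproduces exactly the final line of the derivation displayed before the theorem.

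Finally, to match the form in the theorem statement, I would substitute $t = m-2-\ell$, so that $\phi(2^{m-\ell-2}) = \phi(2^t)$ and $(1 + x^{2^{m-\ell-1}})^{2^\ell} - 1 = (1 + x^{2^{t+1}})^{2^{m-t-2}} - 1$, and then split the remaining product over $k$ at the threshold $\min(\ell,\, m-k-2)$: for $k \ge t+1$ the factor equals $(1 + x^2)^{2^{m-k-2}}$, while for $1 \le k \le t$ it equals $(1 + x^{2^{t+1-k}})^{2^{m-t-2}}$. Reindexing each piece with $s = k$ yields the two displayed products. For the enumeration $\C{E}^w(\B{Z}_{2^m}) = \Psi_{\B{Z}_{2^m}}^w(1)$, each $(1+x^r)^s$ becomes $2^s$, so I would collect exponents of $2$ in both products (using $\sum_{s=t+1}^{m-2} 2^{m-s-2} = 2^{m-t-2} - 1$ and $\sum_{s=1}^{t} 2^{m-t-2} = t\,2^{m-t-2}$), add the contribution $1$ from the $(1+x) \to 2$ factor, and separate the $t = 0$ summand to obtain the stated closed form. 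The main bookkeeping obstacle is performing the product split cleanly and verifying that the exponent of $2$ in the general $t$-term of $\C{E}^w$ does evaluate to $(t+1)2^{m-t-2} + t - 1$; everything before that step is mechanical substitution into Lemma~\ref{fixalpha}.
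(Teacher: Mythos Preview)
Your proposal is correct and follows exactly the paper's approach: the paper's derivation preceding the theorem applies Theorem~\ref{formul1} and Lemma~\ref{fixalpha}, substitutes the computed $X_k$ contributions, passes from $\Aut(\B{Z}_{2^m})$ to $\B{Z}_{2^{m-2}}$ via $\bar\eta$ (picking up the factor $2$), groups by $d=(a,2^{m-2})$, and writes $d=2^\ell$---terminating at precisely the displayed line you describe. Your additional reindexing $t=m-2-\ell$ and the case split $k\le t$ versus $k\ge t+1$ for $(2^\ell,2^{m-k-2})$ supply the bridge to the theorem's stated form that the paper leaves implicit under ``We summarize the above discussions as follow,'' and your exponent bookkeeping for $\C E^w(\B{Z}_{2^m})$ is the intended computation.
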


Next we consider the case when $p$ is an odd prime. It is well-known that there is an isomorphism
$\theta :\Aut(\B{Z}_{p^m})\to \B{Z}_{p^{m-1}(p-1)}$. Since $\theta(-1)=\frac{p^{m-1}(p-1)}{2}$, we have an isomorphism
$\bar{\theta}: \Aut(\B{Z}_{p^m})/\B{Z}_2 \to \B{Z}_{p^{m-1}(p-1)}/\B{Z}_2 = \B{Z}_{\frac{p^{m-1}(p-1)}{2}}.$
It is also well-known that the multiplicative group $A_0$ is isomorphic to $\Aut(\B{Z}_{p^m})$.
By a method similar to the case $p=2$, there is an one to one correspondence between $X_0$
and $\C{P}(\B{Z}_{\frac{p^{m-1}(p-1)}{2}})$, where $\C{P}(\B{Z}_{\frac{p^{m-1}(p-1)}{2}})$ is
the powerset of $\B{Z}_{\frac{p^{m-1}(p-1)}{2}}$.
Furthermore, $\Aut(\B{Z}_{p^m})/\B{Z}_2$-action on $X_k$ is equivalent
to the natural $\B{Z}_{\frac{p^{m-1}(p-1)}{2}}$-action on $\C{P}\left(\B{Z}_{\frac{p^{m-k-1}(p-1)}{2}}\right)$
  for each $k=0,1,2,\ldots, m-1$. For each $\alpha\in A_0=\Aut(\B{Z}_{p^m})$, we can see that
$$\sum_{\Omega\in X_k, \alpha(\Omega)=\Omega} x^{|\Omega|} =
\left(1+x^{\frac{p^{m-k-1}(p-1)}{\left(\bar{\theta}(\alpha),\, \frac{p^{m-k-1}(p-1)}{2}\right)}
}\right)^{\left(\bar{\theta}(\alpha),\, \frac{p^{m-k-1}(p-1)}{2}\right)}-1.$$
By combining the above equation, Theorem~\ref{formul1} and Lemma~\ref{fixalpha}, we obtain the following theorem.

\begin{thm}\label{p^m} For each odd prime $p$, we have
\begin{align*}
&\Psi_{\B{Z}_{p^m}}^w(x) \\
&= \frac{2}{ p^{m-1}(p-1) }  \sum_{d|\frac{\phi(p^m)}{2} } \phi\left( \frac{\phi(p^m)}{2d}\right)
\left(\left(1+x^{\frac{\phi(p^m)}{d} }\right)^{d}-1\right)
\prod_{k=1}^{m-1}
\left(1+x^{\frac{\phi\left(p^{m-k}\right)}
{\left(d, \frac{\phi\left(p^{m-k}\right)}{2}\right)}}\right)^{\left(d, \frac{\phi\left(p^{m-k}\right)}{2}\right),}
\end{align*}
and hence
 $$\C{E}^w(\B{Z}_{p^m}) = \Psi_{\B{Z}_{p^m}}^w(1)= \frac{2}{ p^{m-1}(p-1) }
 \displaystyle \sum_{d|\frac{\phi(p^m)}{2} } \phi\left( \frac{\phi(p^m)}{2d}\right)
\left(2^{d}-1\right)
\prod_{k=1}^{m-1}
2^{\left(d, \frac{\phi\left(p^{m-k}\right)}{2}\right)}.
$$
\end{thm}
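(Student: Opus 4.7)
The plan is to combine Theorem~\ref{formul1}, Lemma~\ref{fixalpha}, and the closed-form layer identity
$$\sum_{\Omega\in X_k,\,\alpha(\Omega)=\Omega}x^{|\Omega|} = \left(1+x^{\phi(p^{m-k})/d_k(\alpha)}\right)^{d_k(\alpha)}-1$$
(with $d_k(\alpha):=(\bar\theta(\alpha),\phi(p^{m-k})/2)$) that is displayed in the paragraph just before the theorem statement. I would first substitute this layer identity into Lemma~\ref{fixalpha}, then average over $\Aut(\B{Z}_{p^m})$ using Theorem~\ref{formul1}, obtaining an expression for $\phi(p^m)\,\Psi^w_{\B{Z}_{p^m}}(x)$ indexed by the automorphism group and factored as the product $\bigl[\,\cdot\,\bigr]_{k=0}\prod_{k=1}^{m-1}(1+\,\cdot\,)$ dictated by Lemma~\ref{fixalpha}.

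The key reindexing step is to pass from $\alpha$ to the divisor $d=(\bar\theta(\alpha),\phi(p^m)/2)$. Two observations drive this. First, $-1$ lies in the kernel of the projection $\bar\theta:\Aut(\B{Z}_{p^m})\to\B{Z}_{\phi(p^m)/2}$ and fixes every symmetric $\Omega=-\Omega$, so the integrand depends on $\alpha$ only through $\bar\theta(\alpha)\in\B{Z}_{\phi(p^m)/2}$ and each value is attained by two automorphisms; this produces the prefactor $\tfrac{2}{p^{m-1}(p-1)}$ and replaces the sum over $\alpha$ by a sum over $a\in\B{Z}_{\phi(p^m)/2}$. Second, since $\phi(p^{m-k})/2$ divides $\phi(p^m)/2$ for every $k\ge 0$, the elementary identity $(a,M)=(\gcd(a,N),M)$ whenever $M\mid N$ yields $(a,\phi(p^{m-k})/2)=(d,\phi(p^{m-k})/2)$, so the entire integrand depends on $a$ only through $d$. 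There are exactly $\phi(\phi(p^m)/(2d))$ elements $a\in\B{Z}_{\phi(p^m)/2}$ with a given gcd $d$, so collecting by $d$ produces the stated formula. Finally, setting $x=1$ collapses each parenthesized binomial to a power of two and gives the enumeration formula for $\C{E}^w(\B{Z}_{p^m})$.

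I expect the only genuine content in this argument to lie in justifying the layer identity for odd $p$, which is asserted above the theorem by analogy with the $p=2$ case. That justification rests on three ingredients: the cyclicity of $(\B{Z}_{p^m})^*$, which gives an isomorphism $\theta:\Aut(\B{Z}_{p^m})\xrightarrow{\sim}\B{Z}_{\phi(p^m)}$ with $\theta(-1)=\phi(p^m)/2$; the $\Aut(\B{Z}_{p^m})$-equivariant bijection $A_k\leftrightarrow(\B{Z}_{p^{m-k}})^*$ sending $p^kt\mapsto t\bmod p^{m-k}$, under which $X_k$ corresponds to $\C{P}(\B{Z}_{\phi(p^{m-k})/2})$ and the induced action becomes translation by $\bar\theta(\alpha)\bmod \phi(p^{m-k})/2$; and the elementary orbit count for translation by $a$ on $\B{Z}_N$, whose fixed subsets have generating function $(1+x^{N/(a,N)})^{(a,N)}$ (the $x^{|\Omega|}=x^{2|\bar\Omega|}$ doubling accounts for the factor that turns $2\cdot\tfrac{\phi(p^{m-k})/2}{(\,\cdot\,)}$ into $\phi(p^{m-k})/(\,\cdot\,)$). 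Once these ingredients are in hand, the subsequent bookkeeping is exactly as sketched in the preceding paragraph.
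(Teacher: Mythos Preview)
Your proposal is correct and follows essentially the same approach as the paper: the paper's own ``proof'' for odd $p$ consists of the single sentence ``By combining the above equation, Theorem~\ref{formul1} and Lemma~\ref{fixalpha}, we obtain the following theorem,'' and your outline makes explicit precisely those steps (substitute the layer identity into Lemma~\ref{fixalpha}, pass from $\alpha$ to $\bar\theta(\alpha)$ with a factor of $2$ from the kernel $\{\pm1\}$, then collect by the divisor $d$ using $\phi(\phi(p^m)/(2d))$ preimages), mirroring the detailed $p=2$ computation the paper carries out just before. Your justification of the layer identity via the equivariant bijection $A_k\leftrightarrow(\mathbb{Z}_{p^{m-k}})^*$ and the translation orbit count is also exactly the ``method similar to the case $p=2$'' the paper invokes.
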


By a similar method, one can get the following result.

\begin{thm}\label{2p^m} For each odd prime $p$, we have
\begin{align*}
\Psi_{\B{Z}_{2p^m}}^w(x)
&= \frac{2}{ p^{m-1}(p-1) }  \sum_{d|\frac{\phi(p^m)}{2} } \phi\left( \frac{\phi(p^m)}{2d}\right) \left[
\left(\left(1+x^{\frac{\phi(p^m)}{d} }\right)^{d}-1\right)\left(1+x^{\frac{\phi(p^m)}{d} }\right)^{d}  \right.\\
& \times \left(\prod_{k=1}^{m-1}
\left(1+x^{\frac{\phi\left(p^{m-k}\right)}
{\left(d, \frac{\phi\left(p^{m-k}\right)}{2}\right)}}\right)^{2\left(d, \frac{\phi\left(p^{m-k}\right)}{2}\right)} \right)(1+x) \\
&+ \left(\left(1+x^{\frac{\phi(p^m)}{d} }\right)^{d}-1\right)\left\{\left(\prod_{k=1}^{m-1}
\left(1+x^{\frac{\phi\left(p^{m-k}\right)}
{\left(d, \frac{\phi\left(p^{m-k}\right)}{2}\right)}}\right)^{\left(d, \frac{\phi\left(p^{m-k}\right)}{2}\right)} \right)(1+x)-1 \right\} \\
& \left.
\times \prod_{k=1}^{m-1}
\left(1+x^{\frac{\phi\left(p^{m-k}\right)}
{\left(d, \frac{\phi\left(p^{m-k}\right)}{2}\right)}}\right)^{\left(d, \frac{\phi\left(p^{m-k}\right)}{2}\right)}  \right]
\end{align*}
and hence
\begin{align*}
\C{E}^w(\B{Z}_{2p^m})  = \Psi_{\B{Z}_{2p^m}}^w(1) & = \frac{2}{ p^{m-1}(p-1) }
 \displaystyle \sum_{d|\frac{\phi(p^m)}{2} } \phi\left( \frac{\phi(p^m)}{2d}\right) \left[
2^{d+1}\left(2^{d}-1\right)
\prod_{k=1}^{m-1}
2^{2\left(d, \frac{\phi\left(p^{m-k}\right)}{2}\right)} \right.\\
& \left. + \left(2^d-1 \right)\left( 2\prod_{k=1}^{m-1}
2^{\left(d, \frac{\phi\left(p^{m-k}\right)}{2}\right)} -1\right)\prod_{k=1}^{m-1}
2^{\left(d, \frac{\phi\left(p^{m-k}\right)}{2}\right)} \right].
\end{align*}
\end{thm}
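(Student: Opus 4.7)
The plan is to follow the strategy of Theorem~\ref{p^m} while carefully tracking the extra $\B{Z}_2$ factor in $\B{Z}_{2p^m}\cong \B{Z}_2\oplus \B{Z}_{p^m}$. Since $\Aut(\B{Z}_{2p^m})\cong \Aut(\B{Z}_{p^m})$ via restriction to the second coordinate, every automorphism $\alpha$ fixes the unique element $t:=(1,0)$ of order two and acts on the remaining elements entirely through its action on the $\B{Z}_{p^m}$-coordinate.

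First I would partition the non-identity elements of $\B{Z}_{2p^m}$ into the $\Aut$-stable blocks $\{t\}$, $V_k^{(0)}:=\{(0,b):b\in A_k\}$, and $V_k^{(1)}:=\{(1,b):b\in A_k\}$ for $k=0,1,\ldots,m-1$. Each $V_k^{(j)}$ is equivariantly in bijection with $A_k$, and the involution $g\mapsto -g$ on $V_k^{(j)}$ corresponds to $b\mapsto -b$ on $A_k$. Consequently, writing $d:=\gcd(\bar\theta(\alpha),\phi(p^m)/2)$ and abbreviating $f_k(x)$ for the generating function of symmetric $\alpha$-invariant subsets of one copy of $A_k$, the proof of Theorem~\ref{p^m} already supplies $f_0(x)=(1+x^{\phi(p^m)/d})^d$ and $f_k(x)=(1+x^{\phi(p^{m-k})/(d,\,\phi(p^{m-k})/2)})^{(d,\,\phi(p^{m-k})/2)}$ for $k\geq 1$. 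Including the $(1+x)$ factor recording whether $t\in\Omega$, the generating function over all symmetric $\alpha$-invariant subsets of $\B{Z}_{2p^m}\setminus\{0\}$ is $(1+x)\prod_{k=0}^{m-1}f_k(x)^2$.

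The new ingredient compared to the $\B{Z}_{p^m}$ case is that such an $\Omega$ fails to generate $\B{Z}_{2p^m}$ precisely when either (i) $\Omega$ contains no element with first coordinate $1$ (missing $t$ and all $V_k^{(1)}$), or (ii) the projection of $\Omega$ to $\B{Z}_{p^m}$ is non-generating (missing $V_0^{(0)}\cup V_0^{(1)}$). A straightforward inclusion-exclusion on these two bad events yields
$$\sum_{\Omega\in \Fix_\alpha(\B{Z}_{2p^m})}x^{|\Omega|}=(f_0-1)\Bigl[(f_0+1)(1+x)\prod_{k=1}^{m-1}f_k^2-\prod_{k=1}^{m-1}f_k\Bigr],$$
and splitting $(f_0+1)(1+x)\prod f_k^2=f_0(1+x)\prod f_k^2+(1+x)\prod f_k^2$ while regrouping the latter two terms as $\prod f_k\cdot[(1+x)\prod f_k-1]$ reproduces exactly the two summands inside the bracket in the stated theorem.

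Finally I would sum over $\alpha\in\Aut(\B{Z}_{2p^m})$ using Theorem~\ref{formul1}, where $|\Aut(\B{Z}_{2p^m})|=\phi(2p^m)=p^{m-1}(p-1)$. Because $\Fix_\alpha=\Fix_{-\alpha}$, automorphisms pair under $\pm 1$; collecting by $d$, each value of $d$ arises for exactly $2\phi(\phi(p^m)/(2d))$ automorphisms, which produces the prefactor $2/(p^{m-1}(p-1))$. The corollary $\C{E}^w(\B{Z}_{2p^m})=\Psi^w_{\B{Z}_{2p^m}}(1)$ then follows by substituting $x=1$, since every $f_k(1)$ collapses to a power of $2$. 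The main obstacle is the inclusion-exclusion bookkeeping and the algebraic regrouping that aligns the compact factored expression with the specific two-term bracket displayed in the theorem; no new ideas beyond those used in Theorem~\ref{p^m} are required.
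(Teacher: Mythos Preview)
Your proposal is correct and is precisely the ``similar method'' the paper alludes to: it mirrors the proof of Theorem~\ref{p^m} by decomposing $\B{Z}_{2p^m}\cong\B{Z}_2\oplus\B{Z}_{p^m}$ into the $\Aut$-stable blocks $\{t\}$ and the two copies of each $A_k$, then uses inclusion--exclusion over the two maximal subgroups $\B{Z}_{p^m}$ and $\B{Z}_{2p^{m-1}}$ to isolate generating sets. The algebraic regrouping you describe indeed yields the two bracketed summands, and the averaging over $\Aut(\B{Z}_{2p^m})\cong\Aut(\B{Z}_{p^m})$ via the quotient by $\pm1$ gives the stated prefactor.
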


\subsection{Degree distribution polynomials for circulant graphs of order $4p$}\label{4pc}

In this subsection, we find the degree distribution polynomials for circulant graphs of  order $4p$, where $p$ is an odd prime number.

Let $n=4p$ for some prime number $p$. For our convenience, we consider  $\B{Z}_n$ as $\B{Z}_{4} \times \B{Z}_{p}$.
Now $\Aut({\B{Z}_n})$ is isomorphic to $\B{Z}^{*}_{4p} \simeq (\B{Z}^{*}_{4} \times \B{Z}^{*}_{p}) \simeq (\B{Z}_{2} \times \B{Z}_{p-1})$.
So any automorphism $ \sigma\in \Aut({\B{Z}_n})$ can be identified with an
element $(j, k)$ in $\B{Z}_{2} \times \B{Z}_{p-1}$.
Let us consider $\B{Z}_{4} \times \B{Z}_{p}$ as a disjoint union of the following subsets;
$$\begin{array}{rl}
A_i &= \{(n_1,n_2 ) \ | \ n_1 =i \ \ \mbox{and} \ \ n_2 \neq 0 \ \}, \\
B_i &= \{(n_1,n_2 ) \ | \  n_1 =i \ \ \mbox{and} \ \ n_2 = 0 \ \},
\end{array}$$
where $i=0,1,2,3$. Note that $\Aut(\B{Z}_{4} \times \B{Z}_{p})$ can also be identified by $A_1 \cup A_3$ and all of the subsets $A_1 \cup A_3$, $A_0$, $A_2$, $B_1 \cup B_3$, $B_0$, $B_2$ are closed under $\Aut(\B{Z}_{4} \times \B{Z}_{p})$-action.

Let $\sigma$ be an element in $\Aut({\B{Z}_n})$ whose corresponding element $\B{Z}_{2} \times \B{Z}_{p-1}$ is $(j, k)$ with $(k,p-1)=d$.
Now an orbit $O_1$ of $\sigma$ in $A_1 \cup A_3$ satisfies $-O_1 = O_1$ if and only if $j=1$, $d |  \frac{p-1}{2}$ and $\frac{p-1}{2d}$ is odd.
Note that if $j=1$ and $\frac{p-1}{d}$ is odd then $|O_2|=\frac{2p-2}{d}$, and if $j=1$, $d |  \frac{p-1}{2}$ and $\frac{p-1}{2d}$ is odd then $|O_2|=\frac{p-1}{d}$.
For any $i=0,2$ and for any orbit $O_2$ of $\sigma$ in $A_i$, we have $-O_2 = O_2$ if and only if $d |  \frac{p-1}{2}$.

For any subset $\Omega \subseteq \B{Z}_{4} \times \B{Z}_{p}$, $\Omega$ is a generating set of $\B{Z}_{4} \times \B{Z}_{p}$ if and only if $\Omega \cap (A_1 \cup A_3) \neq \emptyset$ or $(\Omega \cap (A_0 \cup A_2) \neq \emptyset$ and $\Omega \cap (B_1 \cup B_3) \neq \emptyset)$. So we have the following theorem.

\begin{thm}\label{4p} For each odd prime $p$, we have
\begin{align*}
\Psi_{\B{Z}_{4p}}^w(x) &=  \frac{1}{2p-2} \sum_{d|p-1}\phi\left(\frac{p-1}{d}\right) \left[
\left\{\left(1+x^{\frac{2p-2}{d}}\right)^{d}
 +\left(1+x^{\frac{2p-2}{\left(\frac{p-1}{2}+d, p-1\right)}}\right)^{\left(\frac{p-1}{2}+d, p-1\right)} \right\}\right. \\
& \times \left.  \left(1+x^{\frac{p-1}{\left(d, \frac{p-1}{2}\right)}}\right)^{2\left(d, \frac{p-1}{2}\right)} (1+x^2)(1+x)\right.\\
& \left. -2\left(1+x^{\frac{p-1}{\left(d, \frac{p-1}{2}\right)}}\right)^{2\left(d, \frac{p-1}{2}\right)}(1+x)
-2(1+x^2)(1+x) +2(1+x)  \right],
\end{align*}
and hence
 $$ \C{E}^w(\B{Z}_{4p}) = \Psi_{\B{Z}_{4p}}^w(1)= \frac{1}{2p-2}
 \displaystyle \sum_{d|p-1}\phi\left(\frac{p-1}{d}\right)
\left[ 4^{\left(d, \frac{p-1}{2}\right)+1} \left( 2^d +2^{\left(\frac{p-1}{2}+d, p-1\right)} \right)
- 2^{2\left(d, \frac{p-1}{2}\right)+2} -4 \right].
$$
\end{thm}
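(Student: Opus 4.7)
The plan is to apply Theorem~\ref{formul1} in the form
\[
\Psi_{\B{Z}_{4p}}^w(x)=\frac{1}{2(p-1)}\sum_{\sigma\in\Aut(\B{Z}_{4p})}\sum_{\Omega\in\Fix_\sigma(\B{Z}_{4p})}x^{|\Omega|},
\]
and to exploit the $\Aut(\B{Z}_{4p})$-invariant partition $\B{Z}_{4p}=(A_1\cup A_3)\cup A_0\cup A_2\cup(B_1\cup B_3)\cup B_0\cup B_2$ recorded just before the theorem. Since each block is closed under the automorphism action and under negation, a symmetric $\sigma$-fixed subset $\Omega$ splits into a symmetric $\sigma$-fixed subset on each block, and $x^{|\Omega|}$ factors over the blocks. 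The task thus reduces to writing down, for each $\sigma=(j,k)\in\B{Z}_2\times\B{Z}_{p-1}$ with $d=(k,p-1)$, one one-variable generating polynomial per block that counts symmetric $\sigma$-fixed subsets by cardinality.

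The block-by-block computation proceeds in the spirit of Section~\ref{pp}. In each block I list the $\sigma$-orbits, isolate those closed under negation, and pair the remaining orbits with their negatives; the generating polynomial of each block is then a binomial $(1+x^{\ell})^m$ whose exponent $\ell$ and multiplicity $m$ are the orbit size and the number of (unordered) symmetric orbit pairs, read off from the orbit-size statements already recorded before the theorem. A short check shows that the block $A_1\cup A_3$ contributes the two-term sum
\[
(1+x^{(2p-2)/d})^{d}+(1+x^{(2p-2)/((p-1)/2+d,\,p-1)})^{((p-1)/2+d,\,p-1)},
\]
the two summands arising from $j=1$ and $j=0$ respectively; each of $A_0$ and $A_2$ contributes the common factor $(1+x^{(p-1)/(d,(p-1)/2)})^{(d,(p-1)/2)}$; and the small blocks $B_1\cup B_3$, $B_0$, $B_2$ contribute the elementary factors $(1+x^2)$, $1$, $(1+x)$.

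Next I impose the generating condition. By the paragraph before the theorem, $\Omega$ generates $\B{Z}_{4p}$ iff $\Omega\cap(A_1\cup A_3)\neq\emptyset$, or $\Omega\cap(A_0\cup A_2)\neq\emptyset$ and $\Omega\cap(B_1\cup B_3)\neq\emptyset$. I enforce this by inclusion--exclusion on the failing events; in each block this replaces the corresponding binomial factor by (factor$-1$) to force a nonempty intersection. The surviving terms assemble into the expression
\[
\bigl[\text{full product}\bigr]-2\bigl[(A_0,A_2)\text{-factor}\bigr](1+x)-2(1+x^2)(1+x)+2(1+x),
\]
which is precisely the correction tail visible in the statement. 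Finally I sum over $\sigma$ by grouping automorphisms according to $d=(k,p-1)$: there are $\phi((p-1)/d)$ values of $k$ for each $d\mid p-1$, while the two values of $j$ have already been absorbed into the bracketed sum coming from $A_1\cup A_3$. Dividing by $|\Aut(\B{Z}_{4p})|=2(p-1)$ yields the $1/(2p-2)$ prefactor, and substitution $x=1$ in each binomial produces the corollary.

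The main obstacle I expect is the bookkeeping on the block $A_1\cup A_3$, where the self-negation condition on $\sigma$-orbits has several parity subcases depending on $j$ and on whether $(p-1)/(2d)$ is odd. One must verify that the $j=0$ and $j=1$ contributions assemble cleanly into exactly two binomial summands with the gcd exponents written in the theorem, rather than fragmenting into further subcases. A related subtlety is the appearance of the gcd $((p-1)/2+d,\,p-1)$, which I would extract by identifying the negation action on $\sigma$-orbits in the case $j=0$ with translation by $(p-1)/2$ in the cyclic quotient $\B{Z}_{p-1}$ indexing the orbit set.
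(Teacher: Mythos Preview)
Your proposal is correct and follows exactly the paper's (largely implicit) argument: the same $\Aut(\B Z_{4p})$-invariant block decomposition, the same orbit-by-orbit generating polynomials, and the same inclusion--exclusion on the generating condition, summed over $d\mid p-1$. One small correction to your labeling: the summand $(1+x^{(2p-2)/d})^{d}$ for the block $A_1\cup A_3$ arises from $j=0$, not $j=1$ (when $j=0$ negation swaps $A_1$ with $A_3$, pairing each $\sigma$-orbit of size $(p-1)/d$ with a distinct one), while the summand carrying $\bigl(\tfrac{p-1}{2}+d,\,p-1\bigr)$ comes from $j=1$, where both $\sigma$ and negation act on $A_1\cup A_3\simeq\B Z_2\times\B Z_{p-1}$ as translations by $(1,k)$ and $(1,\tfrac{p-1}{2})$, so that the relevant subgroup is governed by $k-\tfrac{p-1}{2}$ and hence by $\bigl(d+\tfrac{p-1}{2},\,p-1\bigr)$; since you sum over both values of $j$ this swap does not affect the final formula.
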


Note that $\left(\frac{p-1}{2}+d, p-1\right)$ is $\frac{d}{2}$ if $\frac{p-1}{d}$ is odd; $d$ if both $\frac{p-1}{d}$ and $\frac{p-1}{2d}$ are even; $2d$ if  $\frac{p-1}{d}$ is even but $\frac{p-1}{2d}$ is odd.

\subsection{Degree distribution polynomials for circulant graphs of square free order}\label{sqfree}

In this subsection, we find the degree distribution polynomials for circulant graphs of square free order.
First we consider the case that $n$ is a product of three distinct primes; (1) $n=2p_1p_2 \ (2 < p_1 < p_2 )$
and (2) $n=p_1p_2p_3  \ (2<p_1 < p_2 < p_3)$.

First, let $n=2p_1p_2 \ (2 < p_1 < p_2 )$ be a square free natural number.
Since $\Aut({\B{Z}_n})$ is isomorphic to $\B{Z}_{p_1-1} \times \B{Z}_{p_2-1}$,
any automorphism $ \sigma\in \Aut({\B{Z}_n})$ can be identified with an
element $(n_1, n_2)$ in $\B{Z}_{p_1-1} \times \B{Z}_{p_2-1}$.
For $\sigma \in \Aut({\B{Z}_n})$ whose corresponding element in
$\B{Z}_{p_1-1} \times \B{Z}_{p_2-1}$ is $(n_1,n_2)$.
Let $d_1 = (n_1, p_1-1)$, $d_2 = (n_2,p_2-1)$, $\beta_1 =(d_1, \frac{p_1-1}{2})$ and
$\beta_2 =(d_2, \frac{p_2-1}{2})$.
Now we have

$$\sum_{\Omega\in S(\B{Z}_n),\,\sigma(\Omega)= \Omega} =
(1+x)\left(1+x^{\frac{p_1-1}{(d_1, \frac{p_1-1}{2})}}\right)^{2 \beta_1}
\left(1+x^{\frac{p_2-1}{(d_2, \frac{p_2-1}{2})}}\right)^{2 \beta_2}
\left(1+x^{\alpha(d_1,d_2)}\right)^{\frac{2(p_1-1)(p_2-1)}{\alpha(d_1,d_2)}},$$
where $\alpha(d_1 ,d_2 )$ is defined as 
$$\alpha(d_1, d_2) = \begin{cases}
 {\rm{lcm}}(\frac{p_1-1}{d_1},\frac{p_2-1}{d_2}) & {\rm{if}} ~d_i|\frac{p_i-1}{2}~ {\rm{and}~\rm{there}~\rm{exists}~\rm{a}~\rm{constant}}~c~{\rm{such}}\\
 & \mbox{ that  $cd_i \equiv \frac{p_i-1}{2} \  \left({\rm mod} ~p_i-1\right)$ for any $i=1, 2$,}\\[0.5ex]
2\,\mbox{lcm }(\frac{p_1-1}{d_1},\frac{p_2-1}{d_2})
& \mbox{otherwise.}\end{cases}$$ 
Let $\gamma = \frac{(p_1-1)(p_2-1)}{\alpha(d_1,d_2)}$ then, we have
\begin{align*}
\Psi_{\B{Z}_n}^w(x) &=\frac{1}{|\Aut(\B{Z}_n)|} \sum_{\sigma\in {\rm{Aut}}(\B{Z}_n)}
\left(\sum_{\B{Z}_m\le \B{Z}_n} \mu(\B{Z}_m) \left(\sum_{\Omega\in
S(\B{Z}_m),\,\sigma(\Omega)=\Omega}x^{|\Omega|}\right)\right) \\
&= \frac{1}{(p_1-1)(p_2-1)} \sum_{d_1|p_1-1,~d_2|p_2-1} \phi
\left(\frac{p_1-1}{d_1}\right)\phi\left(\frac{p_2-1}{d_2}\right)  \\
& \times \left[ (1+x)\left(1+x^{\frac{p_1-1}{(d_1, \frac{p_1-1}{2})}}
\right)^{2\beta_1}\left(1+x^{\frac{p_2-1}{(d_2, \frac{p_2-1}{2})}}\right)^{2\beta_2}
\left(1+x^{\alpha(d_1,d_2)}\right)^{ \gamma} \right. \\
&- (1+x)\left(1+x^{\frac{p_1-1}{(d_1, \frac{p_1-1}{2})}}\right)^{2\beta_1} -
(1+x)\left(1+x^{\frac{p_2-1}{(d_2, \frac{p_2-1}{2})}}\right)^{2\beta_2}\\
&- \left(1+x^{\frac{p_1-1}{(d_1, \frac{p_1-1}{2})}}\right)^{\beta_1}
\left(1+x^{\frac{p_2-1}{(d_2, \frac{p_2-1}{2})}}\right)^{\beta_2}
\left(1+x^{\alpha(d_1,d_2)}\right)^{ \gamma} \\
& \left. + (1+x) + \left(1+x^{\frac{p_1-1}{(d_1, \frac{p_1-1}{2})}}\right)^{\beta_1}
+\left(1+x^{\frac{p_2-1}{(d_2, \frac{p_2-1}{2})}}\right)^{\beta_2} -1 \right].
\end{align*}

For the next case, let $n=p_1p_2p_3  \ (2<p_1 < p_2 < p_3)$ be a square free natural number.
Since $\Aut({\B{Z}_n})$ is isomorphic to $\B{Z}_{p_1-1} \times \B{Z}_{p_2-1}\times \B{Z}_{p_3-1}$,
any automorphism $\sigma\in \Aut({\B{Z}_n})$ can be identified with an
element $(n_1, n_2, n_3)$ in $\B{Z}_{p_1-1} \times \B{Z}_{p_2-1}\times \B{Z}_{p_3-1}$.
For $\sigma \in \Aut({\B{Z}_n})$ whose corresponding element in
$\B{Z}_{p_1-1} \times \B{Z}_{p_2-1}\times \B{Z}_{p_3-1}$ is $(n_1,n_2,n_3)$,
let $d_1 = (n_1, p_1-1)$, $d_2 = (n_2,p_2-1)$, $d_3 = (n_3, p_3-1)$, $\beta_1 =(d_1, \frac{p_1-1}{2})$,
$\beta_2 =(d_2, \frac{p_2-1}{2})$, $\beta_3 =(d_3, \frac{p_3-1}{2})$,
$\gamma_{12} = \frac{(p_1-1)(p_2-1)}{\alpha(d_1,d_2)}$,
$\gamma_{13} = \frac{(p_1-1)(p_3-1)}{\alpha(d_1,d_3)}$ and
$\gamma_{23} = \frac{(p_2-1)(p_3-1)}{\alpha(d_2,d_3)}$.  Let $\alpha(d_1 ,d_2, d_3)$ be defined as
$$\alpha(d_1, d_2, d_3) = \begin{cases}
 \mbox{lcm }(\frac{p_1-1}{d_1},\frac{p_2-1}{d_2},\frac{p_3-1}{d_\ell}) & \mbox{if $d_i|\frac{p_i-1}{2}$  and
 }~\mbox{there exists a constant $c$ such}\\
 & \mbox{ that  $cd_i \equiv \frac{p_i-1}{2} \  \left({\rm mod} ~p_i-1\right)$ for any $i=1, 2, 3$,}\\[0.5ex]
2\,\mbox{lcm }(\frac{p_1-1}{d_1},\frac{p_2-1}{d_2},\frac{p_3-1}{d_3})
& \mbox{otherwise.}\end{cases}$$  
Let $\delta = \frac{(p_1-1)(p_2-1)(p_3-1)}{\alpha(d_1, d_2, d_3)}$.
Now we have
\begin{align*}
\sum_{\Omega\in S(\B{Z}_n),\,\sigma(\Omega)= \Omega} &=
\left(1+x^{\frac{p_1-1}{(d_1, \frac{p_1-1}{2})}}\right)^{\beta_1}
\left(1+x^{\frac{p_2-1}{(d_2, \frac{p_2-1}{2})}}\right)^{\beta_2}
\left(1+x^{\frac{p_3-1}{(d_3, \frac{p_3-1}{2})}}\right)^{\beta_3} \\
 &\times \left(1+x^{\alpha(d_1,d_2)}\right)^{ \gamma_{12}}
 \left(1+x^{\alpha(d_1,d_3)}\right)^{\gamma_{13}}
 \left(1+x^{\alpha(d_2,d_3)}\right)^{\gamma_{23}}
 \left(1+x^{\alpha(d_1,d_2,d_3)}\right)^{\delta}.
\end{align*}

Therefore,

\begin{align*}
\Psi_{\B{Z}_n}^w(x) &=\frac{1}{|\Aut(\B{Z}_n)|} \sum_{\sigma\in {\rm{Aut}}(\B{Z}_n)}
\left(\sum_{\B{Z}_m\le \B{Z}_n} \mu(\B{Z}_m) \left(\sum_{\Omega\in S(\B{Z}_m),\,\sigma(\Omega)=\Omega}x^{|\Omega|}\right)\right) \\
&= \frac{1}{(p_1-1)(p_2-1)(p_3-1)} \sum_{d_i|p_i-1,~i=1,2,3} \phi\left(\frac{p_1-1}{d_1}\right)\phi\left(\frac{p_2-1}{d_2}\right)\phi\left(\frac{p_3-1}{d_3}\right)  \\
& \times \left[  \left(1+x^{\frac{p_1-1}{(d_1, \frac{p_1-1}{2})}}\right)^{\beta_1}
\left(1+x^{\frac{p_2-1}{(d_2, \frac{p_2-1}{2})}}\right)^{\beta_2}
\left(1+x^{\frac{p_3-1}{(d_3, \frac{p_3-1}{2})}}\right)^{\beta_3} \right. \\
 &\times \left(1+x^{\alpha(d_1,d_2)}\right)^{ \gamma_{12}}
 \left(1+x^{\alpha(d_1,d_3)}\right)^{ \gamma_{13}}
 \left(1+x^{\alpha(d_2,d_3)}\right)^{ \gamma_{23}}
 \left(1+x^{\alpha(d_1,d_2,d_3)}\right)^{ \delta} \\
&\left. - \sum_{1 \le i < j \le 3} \left(1+x^{\frac{p_i-1}{\beta_i}}\right)^{\beta_i}
\left(1+x^{\frac{p_j-1}{\beta_j}}\right)^{\beta_j}
\left(1+x^{\alpha(d_i,d_j)}\right)^{ \gamma_{ij}}
 + \sum_{i=1}^{3}\left(1+x^{\frac{p_i-1}{\beta_i}}\right)^{\beta_i} -1 \right].
\end{align*}

From now on, we consider general case. 
Let $n$ be an odd square free number, namely $n=p_1 p_2 \cdots p_\ell$ 
for some distinct prime numbers $p_1, p_2, \ldots, p_\ell$.
For each divisor $d_i$ of $p_{i}-1$ for $i=1, 2, \ldots, \ell$, we define
$f(d_1, d_2, \ldots, d_\ell)$ be the polynomial
$$\left(1+x^{\alpha(d_1, d_2, \ldots, d_\ell)}\right)^{
{\frac{(p_1-1)(p_2-1)\cdots(p_\ell-1)}{\alpha(d_1, d_2, \ldots, d_\ell)}}},$$
 where $\alpha(d_1, d_2, \ldots, d_\ell)$ is the number defined as follows:
 $$\begin{cases}
  \frac{p_1-1}{ (d_1, \frac{p_1-1}{2})} &\mbox{if $\ell=1$,}\\[1.5ex]
\mbox{lcm }(\frac{p_1-1}{d_1},\frac{p_2-1}{d_2},\ldots,\frac{p_\ell-1}{d_\ell}) & \mbox{if $\ell \ge 2$, $d_i|\frac{p_i-1}{2}$,
 }~\mbox{there exists a constant $c$ such that}\\
 & \mbox{  $cd_i \equiv \frac{p_i-1}{2} \ \  \left({\rm mod} ~p_i-1\right)$ for any $i=1, \dots, \ell$,}\\[0.5ex]
2\,\mbox{lcm }(\frac{p_1-1}{d_1},\frac{p_2-1}{d_2},\ldots,\frac{p_\ell-1}{d_\ell})
& \mbox{otherwise.}\end{cases}
$$
For convenience, for each $d_i$ such that $d_i|p_{i}-1$ for $i=1,2,\ldots,\ell$, we define
$$F_n(d_1, d_2, \ldots, d_\ell) =\prod_{k=0}^{l-1} \prod_{1\le i_i < i_2 < \ldots < i_k \le \ell}
f(d_1, d_2, \ldots, \widehat{d_{i_1}}, \ldots, \widehat{d_{i_2}},\ldots, \widehat{d_{i_k}}\ldots, d_\ell),
$$
where $$f(d_1, \ldots, \widehat{d_{i_1}}, \ldots,  \widehat{d_{i_k}}\ldots, d_\ell)=
f(d_1,\ldots, d_{i_1-1}, d_{i_1+1} \ldots, d_{i_k-1},d_{i_k+1}\ldots, d_\ell).$$

Since $\Aut({\B{Z}_n})$ is isomorphic to $\B{Z}_{p_1-1} \times \B{Z}_{p_2-1} \times \cdots\times\B{Z}_{p_\ell-1}$,
any automorphism $ \sigma\in \Aut({\B{Z}_n})$ can be identified with an
element $(n_1, n_2, \ldots, n_\ell)$ in $\B{Z}_{p_1-1} \times \B{Z}_{p_2-1} \times \cdots\times\B{Z}_{p_\ell-1}$.
Thus, we can see that
$$\sum_{\sigma\in {\rm{Aut}}(\B{Z}_n)} \sum_{\Omega\in S(\B{Z}_n),\,\sigma(\Omega)= \Omega} x^{|\Omega|}=
\sum_{(d_1, d_2 , \ldots, d_l) \in \Phi} \left(\prod_{i=1}^{\ell}\phi\left(\frac{p_i-1}{d_i}\right)\right)
F_n(d_1, \ldots, d_\ell),$$
where  $\Phi = \{ (d_1, d_2 , \ldots, d_l) :~ d_i|(p_i-1), 1\le i\le \ell \}$.
Similarly one can see that for any
$m=p_1 \cdots p_{i_1-1} p_{i_1+1} \cdots$ $ p_{i_2-1}$ $p_{i_2+1}$ $\cdots$ $ p_{i_k-1}$ $ p_{i_k+1}$ $\cdots$ $p_\ell$,
we have
\begin{align*}
& \sum_{\sigma\in \Aut(\B{Z}_n)} \hspace{.3cm}\sum_{\Omega\in S(\B{Z}_m),\,\sigma(\Omega)= \Omega} x^{|\Omega|} \\
& \hspace{1cm} =
\sum_{(d_1, d_2 , \ldots, d_l) \in \Phi} \left( \prod_{i=1}^{\ell}\phi\left(\frac{p_i-1}{d_i}\right) \right)
F_m(d_1, \ldots,  d_{i_1-1} d_{i_1+1}, \ldots ,d_{i_k-1},d_{i_k+1}, \ldots, d_\ell).
\end{align*}

For our convenience, we denote  $F_m(d_1, \ldots,  d_{i_1-1} d_{i_1+1}, \ldots ,d_{i_k-1},d_{i_k+1}, \ldots, d_\ell)$ simply by
$F_{i_1, i_2, \ldots, i_k}(d_1,\ldots, d_\ell)$. Now the following theorem comes form the above discussions, Burnside Lemma and the principal of inclusion and exclusion.

\begin{thm}\label{pqr}
Let $n=p_1p_2 \cdots p_\ell$ be the product of any given $\ell$ distinct odd prime numbers $p_1 < p_2 < \ldots < p_\ell$.
Then we have
$$ \Psi_{\B{Z}_{n}}^w(x)= \frac{1}{\phi(n)} \sum_{(d_1, d_2, \ldots, d_\ell)}
\left(\prod_{i=1}^{\ell} \phi\left(\frac{p_i-1}{d_i}\right) \right)
 \left(\sum_{k=0}^{\ell} (-1)^k \sum_{1\le i_1< i_2 < \ldots < i_k \le \ell} F_{i_1, i_2, \ldots, i_k}(d_1,\ldots, d_\ell) \right),
$$
where $d_i$ runs over all divisors of $p_i-1$ for each $i=1,2, \ldots,\ell$ and  $F_{1, 2, \ldots, \ell}(d_1,\ldots, d_\ell)$ is considered to be $1$.
\end{thm}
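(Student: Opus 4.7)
The plan is to apply Theorem~\ref{formul2} with $\C{A}=\B{Z}_n$ and match each of its three sums---over $\sigma\in\Aut(\B{Z}_n)$, over subgroups $K\le\B{Z}_n$, and over $\sigma$-invariant $\Omega\in S(K)$---to the corresponding ingredient of the displayed formula. Because $n=p_1\cdots p_\ell$ is square-free, the subgroup lattice of $\B{Z}_n$ is the Boolean lattice on $\{p_1,\ldots,p_\ell\}$: every subgroup has the form $\B{Z}_m$ for a unique divisor $m=\prod_{i\in S}p_i$, and $\mu(\B{Z}_m)=(-1)^{\ell-|S|}$. Indexing subgroups by the set $\{i_1<\cdots<i_k\}$ of \emph{missing} primes (so $k=\ell-|S|$) turns this sign into $(-1)^k$, which is exactly the sign in the alternating sum of the statement; the convention $F_{1,2,\ldots,\ell}(d_1,\ldots,d_\ell)=1$ then corresponds to the trivial subgroup $\B{Z}_1=\{0\}$, whose only symmetric subset is $\emptyset$, contributing $x^0=1$.

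For a fixed $\sigma\leftrightarrow(n_1,\ldots,n_\ell)\in\B{Z}_{p_1-1}\times\cdots\times\B{Z}_{p_\ell-1}$ and a fixed subgroup $\B{Z}_m$ with $m=\prod_{i\in S}p_i$, I would compute $\sum_{\Omega\in S(\B{Z}_m),\,\sigma(\Omega)=\Omega}x^{|\Omega|}$ by combining the CRT decomposition $\B{Z}_m\cong\bigoplus_{i\in S}\B{Z}_{p_i}$ with a support partition of $\B{Z}_m\setminus\{0\}$: each nonzero element lies in a unique piece indexed by its support $T\subseteq S$ (the coordinates where it is nonzero). On each such piece the subgroup $\langle\sigma,-\mathrm{id}\rangle$ acts, each of its orbits can independently be placed in $\Omega$ or not, and the generating function factorises as
$$\prod_{\emptyset\neq T\subseteq S}\bigl(1+x^{\alpha(\{d_i\}_{i\in T})}\bigr)^{\prod_{i\in T}(p_i-1)/\alpha(\{d_i\}_{i\in T})},$$
with $d_i=\gcd(n_i,p_i-1)$. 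This is precisely $F_m(\{d_i\}_{i\in S})$, i.e.\ $F_{i_1,\ldots,i_k}(d_1,\ldots,d_\ell)$ with $\{i_1,\ldots,i_k\}=\{1,\ldots,\ell\}\setminus S$. To finish, I would group the outer sum by the tuple $(d_1,\ldots,d_\ell)$, using $|\{n_i:\gcd(n_i,p_i-1)=d_i\}|=\phi((p_i-1)/d_i)$, and divide by $|\Aut(\B{Z}_n)|=\phi(n)$.

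The main technical step is to verify the orbit-length formula for $\alpha(\{d_i\}_{i\in T})$. The $\sigma$-orbit of an element with support $T$ plainly has length $\mathrm{lcm}_{i\in T}((p_i-1)/d_i)$; the question is whether multiplication by $-1$ preserves each such orbit setwise (leaving the orbit count unchanged) or instead fuses disjoint pairs of $\sigma$-orbits (doubling the length, halving the count). Which alternative occurs is governed by whether the simultaneous congruence $cd_i\equiv(p_i-1)/2\pmod{p_i-1}$ for $i\in T$ admits a common solution $c$, and this is precisely the dichotomy encoded in the definition of $\alpha$. Once this case analysis is pinned down, the remainder of the argument is bookkeeping: substitute into Theorem~\ref{formul2}, exchange the sums over $\sigma$ and over subgroups, pull the $(-1)^k$ outside via the Möbius identification, and collect the $\phi((p_i-1)/d_i)$ factors.
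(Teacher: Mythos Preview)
Your proposal is correct and follows essentially the same route as the paper: apply Theorem~\ref{formul2}, identify the subgroup lattice of $\B{Z}_n$ with the Boolean lattice on $\{p_1,\ldots,p_\ell\}$ so that the M\"obius values become $(-1)^k$, compute $\sum_{\Omega\in S(\B{Z}_m),\,\sigma(\Omega)=\Omega}x^{|\Omega|}$ by analysing the orbits of $\langle\sigma,-1\rangle$ on each support block, and then aggregate automorphisms by the tuple $(d_1,\ldots,d_\ell)$ with multiplicity $\prod_i\phi((p_i-1)/d_i)$. Your explanation of the dichotomy defining $\alpha$ (whether $-1$ already lies in the cyclic group $\langle\sigma\rangle$ on the relevant block, versus genuinely doubling the orbit length) is in fact more explicit than the paper's own discussion, but the underlying argument is the same.
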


\begin{table}
\begin{tabular}{|c|l|c|}\hline
$n$ & $\Psi_{\B{Z}_{n}}(x)$ & $\C{E}(\B{Z}_{n})$ \\
\hline $2$ & $x$ & $1$  \\
\hline $3$ & $x^2$ & $1$  \\
\hline $4$ & $x^2 + x^3$ & $2$  \\
\hline $5$ & $2x^2 + x^4$ & $3$  \\
\hline $6$ & $x^2 + 2x^3 +x^4 + x^5$ & $5$  \\
\hline $7$ & $3x^2+3x^4 +x^6$ & $7$  \\
\hline $8$ & $2x^2+2x^3+3x^4+3x^5+x^6+x^7$ & $12$  \\
\hline $9$ & $3x^2+6x^4+4x^6+x^8$ & $14$  \\
\hline $10$ & $2x^2+4x^3+5x^4+6x^5+4x^6+4x^7+x^8+x^9$ & $27$  \\
\hline $11$ & $5x^2+10x^4+10x^6+5x^8+x^{10}$ & $31$  \\
\hline $12$ &  $2x^2+2x^3+9x^4+9x^5+10x^6+10x^7+5x^8+5x^9 +x^{10}+x^{11}$ & $54$  \\
\hline $13$ & $6x^2+15x^4+20x^6+15x^8+6x^{10}+x^{12}$ & $63$  \\
\hline $14$ & $x^2+2x^3+4x^4+5x^5+7x^6+8x^7+5x^8+5x^9+2x^{10}+2x^{11}+x^{12}+x^{13}$ & $119$  \\
\hline $15$ &  $6x^2+20x^4+35x^6+35x^8+21x^{10}+7x^{12}+x^{14}$ & $125$  \\
\hline $16$ & $\begin{matrix}4x^2+4x^3+18x^4+18x^5+34x^6+34x^7+ 35x^8 \\+35x^9+ 21x^{10} +21x^{11} + 7x^{12}+7x^{13}+ x^{14}+x^{15}\end{matrix}$ & $240$  \\
\hline $17$ & $ 8x^2+28x^4+56x^6+70x^8+56x^{10}+28x^{12}+8x^{14}+x^{16}$ & $255$  \\
\hline $18$ & $\begin{matrix}4x^2+5x^3+22x^4+28x^5 + 52x^6+56x^6+  56x^7+69x^8+70x^9\\
+ 56x^{10} +56x^{11}+ 28x^{12} +28x^{13}+8x^{14} +8x^{15}+x^{16} +x^{17}\end{matrix}$ & $548$ \\
\hline $19$ & $9x^2+36x^4+84x^6+126x^8 + 126x^{10}+84x^{12}+  36x^{14}+9x^{16}+x^{18}$ & $511$  \\
\hline  $20$ & $\begin{matrix}4x^2+4x^3+30x^4+30x^5+78x^6+78x^7+125x^8+125x^9+126x^{10}\\
+126x^{11}+84x^{12}+84x^{13}+36x^{14}+36x^{15}+9x^{16}+9x^{17}+x^{18}+x^{19}\end{matrix}$  & $986$  \\
\hline
\end{tabular}
\vskip .2cm
\caption{The degree distribution polynomials $\Psi_{\B{Z}_{n}}(x)$ for circulant graphs up to equivalence for $n \le 20$.} \label{equitable}
\end{table}

\begin{table}
\begin{tabular}{|c|l|c|}\hline
$n$ & $\Psi_{\B{Z}_{n}}^w(x)$ & $\C{E}^w(\B{Z}_{n})$ \\
\hline $2$ & $x$ & $1$  \\
\hline $3$ & $x^2$ & $1$  \\
\hline $4$ & $x^2+x^3$ & $2$  \\
\hline $5$ & $x^2+x^4$ & $2$  \\
\hline $6$ & $x^2 + 2x^3 +x^4 + x^5$ & $5$  \\
\hline $7$ & $x^2 + x^4 + x^6$ & $3$  \\
\hline $8$ & $x^2+x^3+2x^4+2x^5+x^6+x^7$ & $8$  \\
\hline $9$ & $x^2+2x^4+2x^6+x^8$ & $6$  \\
\hline $10$ & $x^2+2x^3+3x^4+4x^5+2x^6+2x^7+x^8+x^9$ & $16$  \\
\hline $11$ & $x^2+2x^4+2x^6+x^8+x^{10}$ & $7$  \\
\hline $12$ &  $x^2+x^3 +6x^4+6x^5+7x^6+7x^7+4x^8+4x^9+x^{10}+x^{11}$ & $38$  \\
\hline $13$ & $x^2+3x^4+4x^6+3x^8+x^{10}+x^{12}$ & $13$  \\
\hline $14$ &  $x^2+2x^3+4x^4+5x^5+7x^6+8x^7+5x^8+5x^9+2x^{10}+2x^{11}+x^{12}+x^{13}$ & $43$  \\
\hline $15$ &  $x^2+6x^4+11x^6+11x^8+7x^{10}+3x^{12}+x^{14}$ & $40$  \\
\hline $16$ & $\begin{matrix}x^2+x^3+5x^4+5x^5+10x^6+10x^7+11x^8\\+11x^9+7x^{10}+7x^{11}+3x^{12}+3x^{13}+x^{14}+x^{15}\end{matrix}$ & $76$  \\
\hline $17$ & $x^2+4x^4+7x^6+10x^8+7x^{10}+4x^{12}+x^{14}+x^{16}$ & $35$  \\
\hline $18$ & $\begin{matrix}x^2+2x^3+7x^4+9x^5+18x^6+20x^7+25x^8+26x^9+20x^{10}\\+20x^{11}+10x^{12}+10x^{13}+4x^{14}+4x^{15} +x^{16}+x^{17}\end{matrix}$  & $178$  \\
\hline $19$ & $x^2+4x^4+10x^6+14x^8+14x^{10}+10x^{12}+4x^{14}+x^{16}+x^{18}$ & $59$  \\
\hline  $20$ & $\begin{matrix} x^2+x^3+9x^4+9x^5+25x^6+25x^7+38x^8+38x^9+39x^{10}+39x^{11}\\
+27x^{12}+27x^{13}+13x^{14}+13x^{15} +4x^{16}+4x^{17}+x^{18}+x^{19}\end{matrix}$  & $314$  \\
\hline
\end{tabular}
\vskip .2cm
\caption{The degree distribution polynomials $\Psi_{\B{Z}_{n}}(x)$ for circulant graphs up to weak equivalence for $n \le 20$.} \label{wequitable}
\end{table}

For even square free number $n$, we have the following theorem by a similar way.

\begin{thm}\label{2pqr}
Let $n=2p_1p_2 \cdots p_\ell$ be a square free number, where $p_1, p_2,\ldots, p_\ell$ are $\ell$ distinct prime numbers.
Then we have
\begin{align*} \Psi_{\B{Z}_{n}}^w(x) &= \frac{1}{\phi(n)} \sum_{(d_1, d_2, \ldots, d_\ell)}
\left(\prod_{i=1}^{\ell} \phi\left(\frac{p_i-1}{d_i}\right) \right)
 \left[(-1)^{\ell}(1+x) \right. \\
 & \left.+ \sum_{k=0}^{\ell-1} (-1)^k \left( \sum_{1\le i_1< i_2 < \ldots < i_k \le \ell} \left( (1+x)F_{i_1, i_2, \ldots, i_k}(d_1,\ldots, d_\ell)^2 - F_{i_1, i_2, \ldots, i_k}(d_1,\ldots, d_\ell) \right) \right) \right],
\end{align*}
where $d_i$ runs over all divisors of $p_i-1$ for each $i=1,2, \ldots,\ell$.
\end{thm}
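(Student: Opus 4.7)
The plan is to mirror the proof of Theorem~\ref{pqr} while incorporating the prime~$2$ via the Chinese Remainder decomposition $\B{Z}_n\cong\B{Z}_2\times\B{Z}_{p_1\cdots p_\ell}$. Since $\Aut(\B{Z}_2)$ is trivial, this yields $\Aut(\B{Z}_n)\cong\prod_{i=1}^\ell\B{Z}_{p_i-1}$, so any $\sigma\in\Aut(\B{Z}_n)$ is parametrized by $(n_1,\ldots,n_\ell)$ and acts as the identity on the $\B{Z}_2$-factor. Starting from Theorem~\ref{formul2}, I enumerate subgroups of $\B{Z}_n$ by divisors of $n$: for each $I\subseteq\{1,\ldots,\ell\}$ with $|I|=k$, there is an odd subgroup $K^{0}_I\cong\B{Z}_{m_I}$ with $m_I=\prod_{i\notin I}p_i$, and an even subgroup $K^{1}_I\cong\B{Z}_2\times\B{Z}_{m_I}$. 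The identity $\mu(\B{Z}_m)=\mu(n/m)$ then gives $\mu(K^{0}_I)=(-1)^{k+1}$ and $\mu(K^{1}_I)=(-1)^{k}$.

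For $K^{0}_I$, the fixed-set generating function is exactly $F_{i_1,\ldots,i_k}(d_1,\ldots,d_\ell)$ from Theorem~\ref{pqr}, where $\{i_1,\ldots,i_k\}=I$ and $d_i=(n_i,p_i-1)$. For $K^{1}_I$, I decompose a symmetric subset as $\Omega=\Omega_0\sqcup\Omega_1$ along its $\B{Z}_2$-coordinate. Since negation fixes the $\B{Z}_2$-coordinate and $\sigma$ acts trivially on it, $\Omega_0$ and $\Omega_1$ satisfy identical invariance constraints; the only asymmetry is that $e=(0,0)\notin\Omega_0$ is forbidden, while the involution $n/2=(1,0)$ is legal in $\Omega_1$, contributing an extra binary choice. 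Hence
$$\sum_{\Omega\in S(K^{1}_I),\,\sigma(\Omega)=\Omega}x^{|\Omega|}=(1+x)\,F_{i_1,\ldots,i_k}(d_1,\ldots,d_\ell)^{2}.$$

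Combining the two subgroups for each $I$ telescopes the Möbius sum to $(-1)^{k}\bigl[(1+x)F_{i_1,\ldots,i_k}^{\,2}-F_{i_1,\ldots,i_k}\bigr]$. Averaging over $\sigma$ by the standard reparametrization $n_i\leftrightarrow d_i=(n_i,p_i-1)$ with multiplicity $\phi((p_i-1)/d_i)$ then yields the stated formula, the top case $I=\{1,\ldots,\ell\}$ being absorbed through the convention $F_{1,2,\ldots,\ell}=1$ carried over from Theorem~\ref{pqr} (the subgroups $K^{0}_I=\{e\}$ and $K^{1}_I=\B{Z}_2$ contributing $1$ and $1+x$ respectively).

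The main new obstacle beyond the odd-$n$ argument is the even-coset analysis: one must verify that the two $\B{Z}_{m_I}$-cosets inside $K^{1}_I$ contribute identical polynomial factors (yielding the square $F_I^{\,2}$ rather than a genuinely new object) and that the unique involution $n/2$ is located so that the $(1+x)$ factor appears exactly once per $I$. Once this local computation is in hand, the rest is Möbius-sign bookkeeping identical in spirit to the proof of Theorem~\ref{pqr}.
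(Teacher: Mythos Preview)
Your approach is exactly what the paper intends by ``a similar way'': decompose $\B{Z}_n\cong\B{Z}_2\times\B{Z}_{p_1\cdots p_\ell}$, note that $\Aut(\B{Z}_2)$ is trivial so the automorphism parametrization is unchanged, pair each odd subgroup $\B{Z}_{m_I}$ with its even counterpart $\B{Z}_2\times\B{Z}_{m_I}$, and observe that in the even subgroup the two $\B{Z}_2$-cosets carry identical $\langle\sigma,-1\rangle$-orbit structure on their nonidentity elements, giving the factor $(1+x)F_I^{\,2}$ (the extra $(1+x)$ coming from the unique involution $(1,0)$). The M\"obius values $(-1)^{k+1}$ and $(-1)^k$ then combine these into $(-1)^k\bigl[(1+x)F_I^{\,2}-F_I\bigr]$, and the averaging over $\sigma$ is identical to that preceding Theorem~\ref{pqr}. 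This is precisely the computation the paper carries out explicitly for $n=2p_1p_2$ before stating the general result.

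One point deserves attention. Your derivation for the top stratum $I=\{1,\dots,\ell\}$ correctly gives
\[
(-1)^{\ell+1}\cdot 1+(-1)^{\ell}(1+x)=(-1)^{\ell}x,
\]
coming from the trivial subgroup and $\B{Z}_2$ respectively. This is \emph{not} literally the term $(-1)^{\ell}(1+x)$ appearing in the displayed formula of Theorem~\ref{2pqr}; the two differ by the constant $(-1)^{\ell}$. Your version is the one consistent with the paper's own worked case $n=2p_1p_2$ (where the last two summands are $+(1+x)$ and $-1$, totalling $x$) and with Corollary~\ref{z2p} (whose trailing term is $-x$, not $-(1+x)$). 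So your argument is sound; the discrepancy lies in the theorem statement, and you should flag it rather than claim your computation ``absorbs'' into the printed formula as written.
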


For $n \le 20$, the degree distribution polynomials for circulant graphs up to equivalence are 
given in Table~\ref{equitable} and the degree distribution polynomials for circulant graphs up to weak equivalence are
given in Table~\ref{wequitable}.

\section{Degree distribution polynomials for Cayley graphs of order $p$ or $2p$} \label{cayleydis}

Throughout this section, we assume that $p$ is an odd prime. It is well-known that  any group of order $p$ is isomorphic to
the cyclic group $\B{Z}_p$ and that  any group of order $2p$ is
 isomorphic to the cyclic group $\B{Z}_{2p}$ or the dihedral group $\B{D}_p$. By Theorem~\ref{p^m}, we have the following corollary.

\begin{cor}\label{zp} For each odd prime $p$, the  distribution polynomial for the weak equivalence classes of Cayley graphs of order $p$ is
$$\Psi_{\B{Z}_{p}}^w(x)
= \frac{2}{p-1} \displaystyle \sum_{d|\frac{p-1}{2} } \phi\left( \frac{p-1}{2d}\right)
\left(\left(1+x^{\frac{p-1}{d} }\right)^{d}-1\right),
$$ and hence
$$\C{E}^w(\B{Z}_{p}) = \Psi_{\B{Z}_{p}}^w(1)=\frac{2}{p-1} \displaystyle \sum_{d|\frac{p-1}{2} } \phi\left( \frac{p-1}{2d}\right)
\left(2^{d}-1\right).$$
\end{cor}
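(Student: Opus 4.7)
The plan is to derive Corollary~\ref{zp} by specializing Theorem~\ref{p^m} to the case $m=1$. Since $p$ is prime, any group of order $p$ is isomorphic to $\B{Z}_p$, so the weak equivalence classes of Cayley graphs of order $p$ coincide with those of $\B{Z}_p = \B{Z}_{p^1}$, and Theorem~\ref{p^m} applies directly with no translation needed.

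The first step is to substitute $m=1$ into the formula for $\Psi_{\B{Z}_{p^m}}^w(x)$. The prefactor $2/(p^{m-1}(p-1))$ becomes $2/(p-1)$, and since $\phi(p^m) = \phi(p) = p-1$, the index of summation becomes $d \mid (p-1)/2$ and the exponent $\phi(p^m)/d$ becomes $(p-1)/d$. The crucial simplification is that the product $\prod_{k=1}^{m-1}$ in the general formula becomes $\prod_{k=1}^{0}$, which is an empty product equal to $1$, so the entire auxiliary product factor disappears. What remains is precisely
$$\Psi_{\B{Z}_p}^w(x) = \frac{2}{p-1}\sum_{d \mid \frac{p-1}{2}} \phi\!\left(\frac{p-1}{2d}\right)\!\left((1+x^{(p-1)/d})^d - 1\right),$$
which is exactly the stated formula.

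To obtain $\C{E}^w(\B{Z}_p) = \Psi_{\B{Z}_p}^w(1)$, evaluate the polynomial at $x=1$: each factor $(1+x^{(p-1)/d})^d - 1$ collapses to $2^d - 1$, yielding the claimed count. There is essentially no obstacle here, as the corollary is a direct specialization of Theorem~\ref{p^m}; the only point requiring care is the empty-product convention in the degenerate case $m=1$, which is precisely what makes the general formula reduce to this elegant closed form.
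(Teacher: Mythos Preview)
Your proposal is correct and follows exactly the same approach as the paper, which simply states that the corollary follows from Theorem~\ref{p^m} (specialized to $m=1$). Your explicit handling of the empty product $\prod_{k=1}^{0}=1$ is the only detail worth noting, and you have addressed it correctly.
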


 By Theorem~\ref{pqr}, we have the following.

\begin{cor}\label{z2p} For each odd prime $p$, we have
$$\Psi_{\B{Z}_{2p}}^w(x)
= \left[\frac{2}{p-1} \sum_{d|\frac{p-1}{2} } \phi\left( \frac{p-1}{2d}\right)
\left(1+x^{\frac{p-1}{d} }\right)^{d} \left(\left(1+x^{\frac{p-1}{d} }\right)^{d}(1+x)-1 \right) \right] -x,
$$ and hence
$$\C{E}^w(\B{Z}_{2p}) = \Psi_{\B{Z}_{2p}}^w(1)=-1 + \frac{2}{p-1} \sum_{d|\frac{p-1}{2} } \phi\left( \frac{p-1}{2d}\right)
2^{d} \left(2^{d+1}
-1\right).$$
\end{cor}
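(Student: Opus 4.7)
The plan is to apply Theorem~\ref{2p^m} with $m=1$ (equivalently, Theorem~\ref{2pqr} with $\ell=1$) and then carry out a short algebraic simplification. Writing $C_d := (1+x^{(p-1)/d})^d$ for brevity, at $m=1$ every empty product $\prod_{k=1}^{0}(\cdots)$ in Theorem~\ref{2p^m} equals $1$, so the large bracketed summand there collapses to
$$(C_d - 1)\,C_d\,(1+x) \;+\; (C_d - 1)\bigl((1+x) - 1\bigr) \;=\; (C_d - 1)\bigl[C_d(1+x) + x\bigr].$$
Hence Theorem~\ref{2p^m} at $m=1$ specializes to
$$\Psi_{\B{Z}_{2p}}^w(x) \;=\; \frac{2}{p-1}\sum_{d\mid\frac{p-1}{2}} \phi\!\left(\frac{p-1}{2d}\right) (C_d-1)\bigl[C_d(1+x) + x\bigr].$$

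Next I would expand, by direct algebra, the identity $(C_d-1)[C_d(1+x)+x] = C_d\bigl[C_d(1+x)-1\bigr] - x$ and split the sum accordingly:
$$\Psi_{\B{Z}_{2p}}^w(x) \;=\; \frac{2}{p-1}\sum_{d\mid\frac{p-1}{2}} \phi\!\left(\frac{p-1}{2d}\right) C_d\bigl[C_d(1+x)-1\bigr] \;-\; \frac{2x}{p-1} \sum_{d\mid\frac{p-1}{2}}\phi\!\left(\frac{p-1}{2d}\right).$$
The standard identity $\sum_{d\mid n}\phi(n/d)=n$ applied with $n=(p-1)/2$ collapses the second sum to $(p-1)/2$, so the trailing term is exactly $-x$, yielding the asserted polynomial. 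The count $\C{E}^w(\B{Z}_{2p})=\Psi_{\B{Z}_{2p}}^w(1)$ then follows by setting $x=1$: one has $C_d|_{x=1} = 2^d$ and $C_d[C_d(1+x)-1]\big|_{x=1} = 2^d(2^{d+1}-1)$.

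No serious obstacle is expected: the corollary is essentially an algebraic repackaging of Theorem~\ref{2p^m} at $m=1$. The only point requiring care is recognizing that the two bracketed pieces of Theorem~\ref{2p^m} combine into the compact form $(C_d-1)[C_d(1+x)+x]$, which can then be rewritten to expose a common $-x$ constant term that becomes uniform after applying the $\phi$-sum identity.
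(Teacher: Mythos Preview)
Your argument is correct and follows essentially the same route as the paper: the paper simply states ``By Theorem~\ref{pqr}, we have the following'' with no further computation, so your specialization of Theorem~\ref{2p^m} (equivalently Theorem~\ref{2pqr}) at $m=1$ together with the algebraic rewriting $(C_d-1)[C_d(1+x)+x]=C_d[C_d(1+x)-1]-x$ and the identity $\sum_{d\mid n}\phi(n/d)=n$ is exactly the missing detail. Note that the paper's reference to Theorem~\ref{pqr} (odd square-free order) appears to be a typo for Theorem~\ref{2pqr} or Theorem~\ref{2p^m}, since $2p$ is even; your choice is the appropriate one.
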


As the final part of this section, we will find the  distribution polynomials for the weak equivalence classes of  Cayley graphs
whose underlying group is the dihedral group $\B{D}_p$. It is well-known that $\Aut(\B{D}_p)$
is the set $\{\alpha_{ij}: i=1,2,\ldots, p-1, j=1,2, \ldots, p\}$, where
$\alpha_{ij}(a)=a^i$ and $\alpha_{ij}(b)=ba^j$. For $i=1$, it is not hard to show that
$$\sum_{\Omega\in S(\B{D}_p),\,\alpha_{1j}(\Omega)=\Omega}x^{|\Omega|}=
\begin{cases}
(1+x^2)^{\frac{p-1}{2}}(1+x)^p-1 & \mbox{ if $j=p$,}\\
(1+x^2)^{\frac{p-1}{2}}(1+x^p)-1 & \mbox{ if $j\not=p$,}
\end{cases}
$$
$$\sum_{\Omega\in S(\B{Z}_p),\,\alpha_{1j}(\Omega)=\Omega}x^{|\Omega|}=
(1+x^2)^{\frac{p-1}{2}}-1,$$
and
$$\sum_{\Omega\in S(\B{D}_1),\,\alpha_{1j}(\Omega)=\Omega}x^{|\Omega|}=
\begin{cases}
x & \mbox{ if $j=p$,}\\
0 & \mbox{ if $j\not=p$.}
\end{cases}
$$
For convenience, for each $i\in \Aut(\B{Z}_p)$, let $i^*$ be the element in $\B{Z}_{p-1}$  corresponding to $i$ under the
isomorphism between $\Aut(\B{Z}_p)$ and $\B{Z}_{p-1}$. Then $\frac{p-1}{(i^*, p-1)}$ and  $\frac{p-1}{(i^*, \frac{p-1}{2})}$ have
 the order of $i^*$ in $\B{Z}_{p-1}$ and $\B{Z}_{p-1}/\B{Z}_2$, respectively,
 where $\B{Z}_2$ is the subgroup $\{0,\frac{p-1}{2}\}$ of $\B{Z}_{p-1}$. For $i\not=1$, it is also not hard to show that
$$\sum_{\Omega\in S(\B{D}_p),\,\alpha_{ij}(\Omega)=\Omega}x^{|\Omega|}=
\left(1+x^\frac{p-1}{(i^*, \frac{p-1}{2})}\right)^{(i^*, \frac{p-1}{2})}
\left(1+x^\frac{p-1}{(i^*, p-1)}\right)^{(i^*, p-1)}(1+x)-1,$$
$$\sum_{\Omega\in S(\B{Z}_p),\,\alpha_{ij}(\Omega)=\Omega}x^{|\Omega|}=
\left(1+x^\frac{p-1}{(i^*, \frac{p-1}{2})}\right)^{(i^*, \frac{p-1}{2})}-1, $$
and
$$\sum_{\Omega\in S(\B{D}_1),\,\alpha_{ij}(\Omega)=\Omega}x^{|\Omega|}=
\begin{cases}
x & \mbox{ if $\B{D}_1=\{0\} \cup \{ba^{(i-1)^{-1}(-j)}\}$,}\\
0 & \mbox{ if otherwise,}
\end{cases}
$$
where $(i-1)^{-1}$ is the inverse in $\Aut(\B{Z}_p)$. Now, Theorem~\ref{dp} follows from the above discussion and
 Theorem~\ref{formul2}.

\begin{thm}\label{dp} For each odd prime $p$, we have
\begin{align*}
p(p-1)&\Psi_{\B{D}_{p}}^w(x) = \displaystyle (1+x^2)^{\frac{p-1}{2}}\left[(1+x)^p+(p-1)x^p-1\right]-px  \\
&+ \sum_{d|\frac{p-1}{2}} \phi\left(\frac{p-1}{2}\right)
\left[\left(1+x^\frac{p-1}{(d, \frac{p-1}{2})}\right)^{(d, \frac{p-1}{2})}
\left(\left(1+x^\frac{p-1}{d}\right)^{d}(1+x)-1\right)-x\right],
\end{align*}
and hence
\begin{align*}
&\C{E}^w(\B{D}_{p})\\
&=\frac{1}{p(p-1)}\left[ 2^{\frac{p-1}{2}}\left(2^p+p-2\right)-p
 + \displaystyle  p\,\sum_{d|\frac{p-1}{2}} \phi\left(\frac{p-1}{2}\right)
\left( 2^{(d, \frac{p-1}{2})+d+1}
- 2^{(d, \frac{p-1}{2})} -1\right) \right].
\end{align*}
\end{thm}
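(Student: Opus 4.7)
The plan is to invoke Theorem~\ref{formul2} for $\C{A}=\B{D}_p$ and combine the fixed-set polynomials displayed immediately above the theorem statement. The subgroup lattice of $\B{D}_p$ (for $p$ an odd prime) consists of $\{e\}$, the cyclic subgroup $\B{Z}_p=\langle a\rangle$, the $p$ reflection subgroups $\B{D}_1^{(k)}=\{e,ba^k\}$ for $k=0,1,\ldots,p-1$, and $\B{D}_p$ itself. Solving the recursion $\sum_{H\ge K}\mu(H)=\delta_{K,\B{D}_p}$ top-down yields $\mu(\B{D}_p)=1$, $\mu(\B{Z}_p)=-1$, $\mu(\B{D}_1^{(k)})=-1$, and $\mu(\{e\})=p$. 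Since $|\Aut(\B{D}_p)|=p(p-1)$, it remains to evaluate $\sum_{i,j}\sum_K\mu(K)\sum_{\Omega\in S(K),\alpha_{ij}(\Omega)=\Omega}x^{|\Omega|}$ and collect the terms.

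First I would dispose of the case $i=1$. Here $\alpha_{1j}$ fixes $\B{Z}_p$ pointwise, so the $\B{Z}_p$-factor of every fixed-set polynomial is $(1+x^2)^{(p-1)/2}$; for $j=p$ the automorphism is the identity and every element of $S(\B{D}_p)$ is fixed, whereas for $j\ne p$ the map $ba^k\mapsto ba^{k+j}$ is a single $p$-cycle on the reflections, so the reflections contribute only $1+x^p$ collectively and each individual $\B{D}_1^{(k)}$-sum collapses to $1$. Substituting the displayed formulas and forming the Möbius combination, with $\mu(\{e\})\cdot 1=p$ cancelling the $-\sum_k 1$ coming from the reflection subgroups, the $j=p$ summand becomes $(1+x^2)^{(p-1)/2}[(1+x)^p-1]-px$, and each of the $p-1$ non-identity summands becomes $(1+x^2)^{(p-1)/2}x^p$; summing over $j$ produces the first bracketed term of the theorem.

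For $i\ne 1$ the crucial observation is that $ba^k\mapsto ba^{ik+j}$ is an affine map on $\B{Z}_p$ with a unique fixed point $k_0=-j/(i-1)$, so exactly one reflection subgroup $\B{D}_1^{(k_0)}$ contributes the full $1+x$ while the other $p-1$ contribute only $1$; the Möbius cancellation against $\mu(\{e\})=p$ then collapses the $\B{D}_1$-plus-$\{e\}$ block to a clean $-x$. Combining this with the displayed formulas for $\sum_{\Omega\in S(\B{D}_p),\alpha_{ij}(\Omega)=\Omega}x^{|\Omega|}$ and $\sum_{\Omega\in S(\B{Z}_p),\alpha_{ij}(\Omega)=\Omega}x^{|\Omega|}$, and then re-indexing the automorphisms with $i\ne 1$ by $d=(i^*,(p-1)/2)$ (the count $\phi((p-1)/(2d))$ arising from the $\pm 1$-identification forced by $\Omega=\Omega^{-1}$, with the factor $p$ coming from the free choice of $j$), produces the second summand.

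The main obstacle is the careful bookkeeping that distinguishes $(i^*,p-1)$ from $(i^*,(p-1)/2)$: the former governs the orbit size of the bare automorphism $a\mapsto a^i$ acting on $\B{Z}_p^*$, but the orbits relevant for palindromic $\Omega$ are those of $\langle i,-1\rangle$, whose size $(p-1)/(i^*,(p-1)/2)$ equals either $|\langle i\rangle|$ or $2|\langle i\rangle|$ according to whether $-1\in\langle i\rangle$. Once the algebra is assembled, the corollary $\C{E}^w(\B{D}_p)=\Psi^w_{\B{D}_p}(1)$ follows by setting $x=1$: every factor $(1+x^e)^m$ becomes $2^m$, the expression $(1+x)^p+(p-1)x^p-1$ becomes $2^p+p-2$, and the residual $-1$'s and $-x$ combine into the constant terms of the stated closed form.
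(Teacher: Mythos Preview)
Your proposal is correct and follows exactly the paper's route: apply Theorem~\ref{formul2} to $\B{D}_p$, using the fixed-set polynomials for $S(\B{D}_p)$, $S(\B{Z}_p)$ and $S(\B{D}_1)$ that are displayed immediately above the theorem. The paper gives no further argument beyond ``follows from the above discussion and Theorem~\ref{formul2}'', so your explicit computation of the M\"obius values on the subgroup lattice, your separate treatment of $j=p$ versus $j\ne p$ in the $i=1$ block, and your affine fixed-point observation for $i\ne 1$ are in fact more detailed than what the paper supplies.

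One small slip in your bookkeeping: the natural re-indexing variable for the $i\ne 1$ block is $d=(i^*,p-1)$, not $d=(i^*,(p-1)/2)$. With $d=(i^*,p-1)$ one has $(d,(p-1)/2)=(i^*,(p-1)/2)$ automatically, which is why both quantities appear in the displayed formula; the number of $i\ne 1$ with a given $d$ is then $\phi((p-1)/d)$, and the factor $p$ comes from the sum over $j$ exactly as you say. Your parenthetical about a ``$\pm1$-identification'' producing the count $\phi((p-1)/(2d))$ is not quite right---the $\Omega=\Omega^{-1}$ symmetry is already absorbed into the displayed fixed-set polynomials and plays no role in the re-indexing of the automorphisms themselves. (To be fair, the printed theorem statement has visible typos in the summation range, the argument of $\phi$, and a missing factor of $p$ in the second block, so the mismatch is not entirely on your side.)
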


\end{document}